\newtheorem{theorem}{Theorem}[section]
\newtheorem{proposition}[theorem]{Proposition}
\newtheorem{question}[theorem]{Question}
\newcommand{\jordan}[1]{{\color{violet} ($\spadesuit$ Jordan: #1)}}
\title{PatternBoost: Constructions in Mathematics with a Little Help from AI}
\author{
Fran\c cois Charton\thanks{FAIR, Meta -- CERMICS, Ecole des Ponts. Email:
		\href{mailto:fcharton@meta.com} {\nolinkurl {fcharton@meta.com}}.}
  \and
	Jordan Ellenberg\thanks{Department of Mathematics, University of Wisconsin–Madison. Email:
          \href{mailto:ellenber@math.wisc.edu} {\nolinkurl {ellenber@math.wisc.edu}}. Research supported in part by NSF Grant DMS-2301386.}
        \and
  Adam Zsolt Wagner\thanks{Mathematical Sciences Department, Worcester Polytechnic Institute. Email:
		\href{mailto:zadam@wpi.edu} {\nolinkurl {zadam@wpi.edu}}.} 
 \and
	Geordie Williamson\thanks{Sydney Mathematical Research Institute, University of Sydney. Email:
          \href{mailto:g.williamson@sydney.edu.au } {\nolinkurl{g.williamson@sydney.edu.au }}
            Research supported in part by Australian Research Council Discovery Project DP230102982,
            an Australian Research Council Laureate Fellowship FL230100256 and the Max Planck Humboldt
            Research Award.}
}
\begin{document}

\maketitle

\begin{abstract}

    We introduce PatternBoost, a flexible method for finding interesting constructions in mathematics. Our algorithm alternates between two phases. In the first ``local'' phase, a classical search algorithm is used to produce many desirable constructions. In the second ``global'' phase, a transformer neural network is trained on the best such constructions. Samples from the trained transformer are then used as seeds for the first phase, and the process is repeated. We give a detailed introduction to this technique, and discuss the results of its application to several problems in extremal combinatorics. The performance of PatternBoost varies across different problems, but there are many situations where its performance is quite impressive. Using our technique, we find the best known solutions to several long-standing problems, including the construction of a counterexample to a conjecture that had remained open for 30 years.
\end{abstract}

\section{Introduction}

In this paper we introduce and give examples of a computational method, called PatternBoost, for generating interesting constructions in pure mathematics. 

The method involves an iterative alternation between ``local'' and ``global".  The former, as we will see, is typically a simple greedy algorithm.  The latter is a form of genetic algorithm using a {\em transformer}, a flexible machine learning technique which we believe is especially well-suited for problems of this nature.

To get the sense of how this iteration might look, consider an example of a collective human endeavor, the design of bicycles.  The ``local" step involves many individual bicyclemakers each making a sequence of careful tweaks to their design, trying to make it as efficient and comfortable as possible.  The ``global" step involves people living in the world, seeing lots of bicycles around, each one of which has been carefully optimized locally, and then developing a new bicycle design based on those they have observed.  Of course this new design would then be carefully optimized by both its designer and others; and if after those tweaks, the new bikes turned out to be notably comfortable and efficient, a lot of them would be sold, and they would join the general fleet of bicycles observed by the next prospective designer,... and on and on we go.

Mathematical objects are not bicycles. But human beings can abstract features of bicycles and develop new objects that we recognize as bicycles, despite their not being identical with any existing examples, and mathematicians do the same with mathematical objects. However, it is typically difficult to automate this process. Our hope for the methods described here is that techniques from machine learning (and in particular transformers) have at least some capabilities of this kind -- that presented with a list of mathematical entities, they can produce further examples which are, at least in some respects, ``of the same kind" as those observed.

Our work is strongly influenced by earlier work \cite{wagner2021constructions} of the third author. There the cross-entropy method in reinforcement learning is used to find counter-examples to several problems in combinatorics. The problem with the cross-entropy method (in the bare form used in \cite{wagner2021constructions}) is its scaling: the vanilla neural network becomes difficult to train when the sequence length exceeds a few hundred tokens. A very similar problem is encountered in AI when trying to use a vanilla neural network for next token prediction on long sequences of letters or words, and it is on this style of problem that the transformer architecture excels.

One of the main strengths of PatternBoost is its broad applicability. By adding a global step that uses transformers to suggest better starting points for the local search, PatternBoost can improve results across many optimization problems. One may think of PatternBoost as an extra layer that can be placed on top of any local search method, often leading to better solutions than with local search alone. Simply put, whatever local search algorithm we have, PatternBoost can often make it better.

We emphasize that our chief aim is to develop a useful and simple tool for working mathematicians, one which does not require extensive expertise in machine learning or access to industry-level computing power. One difficulty for using machine learning as a practical tool in mathematics is that machine learning is hard! One can lose many hours tuning hyperparameters, exploring different tokenization schemes, and so on.  A virtue of PatternBoost, as we see it, is that the transformer architecture appears very resilient, and can often be used ``off the shelf'' without much tinkering by the mathematician whose expertise and interest may be elsewhere.  (Though as we will see in the text, these choices, e.g.~the choice of tokenization, can actually make a difference in performance!) We use a beautiful and simple implementation of a transformer, given by Andrej Kaparthy's Makemore, which in our experiments seems to produce useful output over a wide range of mathematical contexts.

The problems discussed here are just the first few we tried as we developed PatternBoost -- we hope and expect that other mathematicians will find it fun to carry out further experiments, helping to map out the still-mysterious territory of which mathematical problems are amenable to ML-boosted approaches.  In particular: the examples discussed in this paper are mostly in the realm of extremal combinatorics, where the transformer is used to construct combinatorial examples that are as large as possible subject to some constraints.  Certainly combinatorial structures are the entities that are easiest to present as input to the transformer; but we do not see the method as in principle limited to that part of mathematics.  Indeed, there is nothing in the method which is specific to mathematics at all!  We would be interested to understand whether there are problems outside mathematics to which PatternBoost can be applied.  One obvious challenge is that a proposed example in mathematics can often be evaluated mechanically, reliably, and quickly, and this is crucial for PatternBoost; in other domains, evaluation may pose more difficulties.

The code accompanying this paper can be found at:

\noindent\url{https://github.com/zawagner22/transformers_math_experiments}.

\iffalse

\jordan{I have tried to incorporate Geordie's bullet points into the introduction above; I think I got everything in except sonnet probability.}

\jordan{OK I tried to write something short that hit the main bullet points below!}

todo. 
\begin{itemize}
    \item this is a blackbox that we can add to any local search method, to get an improvement
    \item local-global method: transformer understands the global structure, local search fixes small local mistakes, it seems good to iterate these two
    \item humans don't draw a graph edge by edge, they have a global picture
    \item a new spin on genetic algorithms
    \item useful and simple tool for mathematicians
    \item please use our methods, it's fun!
\end{itemize}
\fi

\subsection*{Related works}

\emph{Neural architectures for optimization} were introduced in the 1980s. Hopfield and Tank~\cite{hopfield_tank} demonstrated their use for the Travelling Salesman Problem, and Dhingra and Rao~\cite{dhingra} for nonlinear problems. This is still an active field of research~\cite{amos2021optnetdifferentiableoptimizationlayer,chen2022neuraloptimizationmachineneural,amos2023tutorialamortizedoptimization}. These approaches leverage the Universal Approximation Theorem~\cite{cybenko1989approximation,hornik1991approximation}, which states that a trained neural network can approximate any continuous mapping. The neural network is trained to approximate the solution of a dynamical system, by minimizing the discrepancy between its output and the correct solution (see~\cite{brockett1991dynamical} for a general discussion of these methods, and~\cite{chen2019neuralordinarydifferentialequations} for applications to differential equations).  

\emph{Transformer architectures} were proposed by Vaswani et al.~\cite{transformer17}. In the original architecture, designed for machine translation, the model used two transformer stacks: a bidirectional encoder, tasked to process the input sequence, and an auto-regressive decoder, which would predict the translation, one token at a time, from its previous output and the input sequence transformed by the encoder. Makemore uses a simpler architecture, known as decoder-only, introduced by Radford et al. for GPT-2~\cite{radford2019language}, and which serves as the basis of modern Large Language Models~\cite{GPT3brown2020,llamatouvron2023}. 

Many authors have studied the use of \emph{Transformers in mathematics}. Exact integer arithmetic has proven to be surprisingly difficult~\cite{nogueira2021investigating,lee2023teachingarithmeticsmalltransformers}. On the other hand, transformers could be trained to perform harder tasks, such as symbolic integration~\cite{lample2019deep}, computing eigenvalues~\cite{charton2022linear}, and even solving problems of geometry from the International Mathematical Olympiads~\cite{trinh2024}. 

Finally, several works have applied neural architectures to \emph{actual research problems}: Davies et al.~\cite{davies2021advancing} to knot and representation theory, Wagner~\cite{wagner2021constructions} to graph conjectures, Romera-Paredes et al.~\cite{funsearch2023} to combinatorics, and Alfarano et al.~\cite{alfarano2024globallyapunovfunctionslongstanding} to control theory.

\section{A simple illustrative problem: many edges with no triangles}\label{sec:trifree}

  How many edges can an $n$-vertex graph have, if no three edges form a triangle? It is very interesting to introspect how we might go about solving this problem. A first step is probably to come up with some examples of graphs on a small number of vertices with many edges and no triangles:
  \[
\includegraphics[scale=0.3]{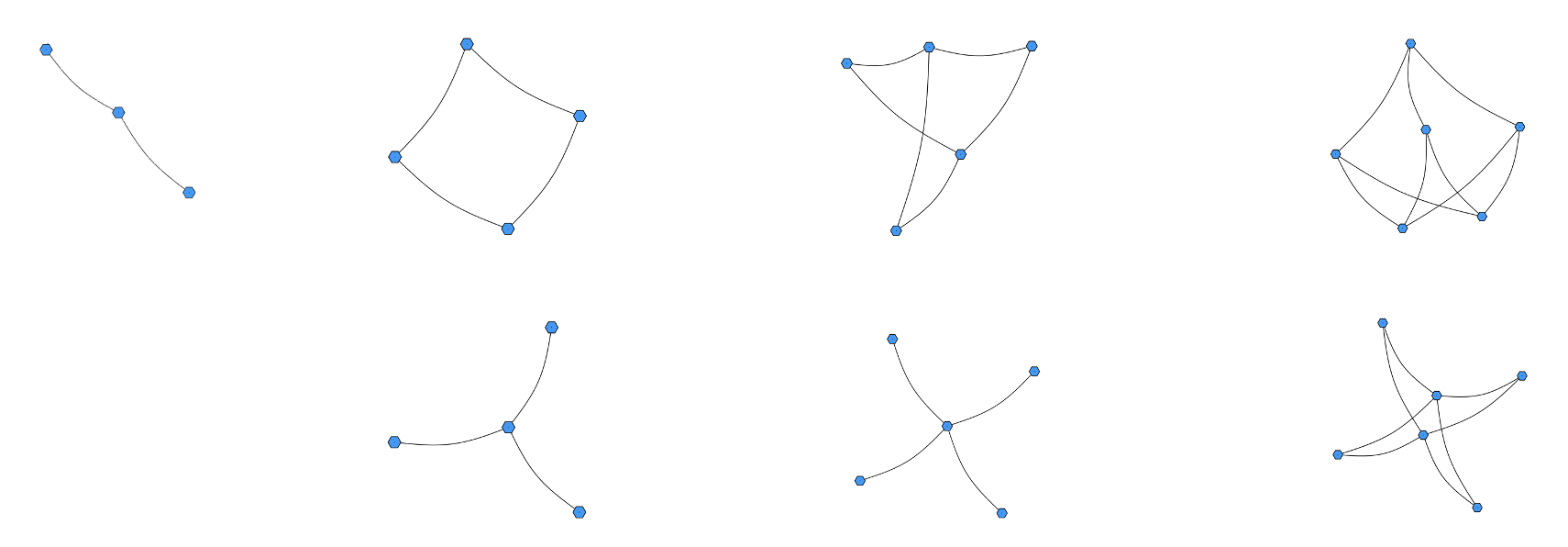}
    \]
We then might be lucky enough to notice that many/all examples in our list are in fact bipartite:
  \[
\includegraphics[scale=0.3]{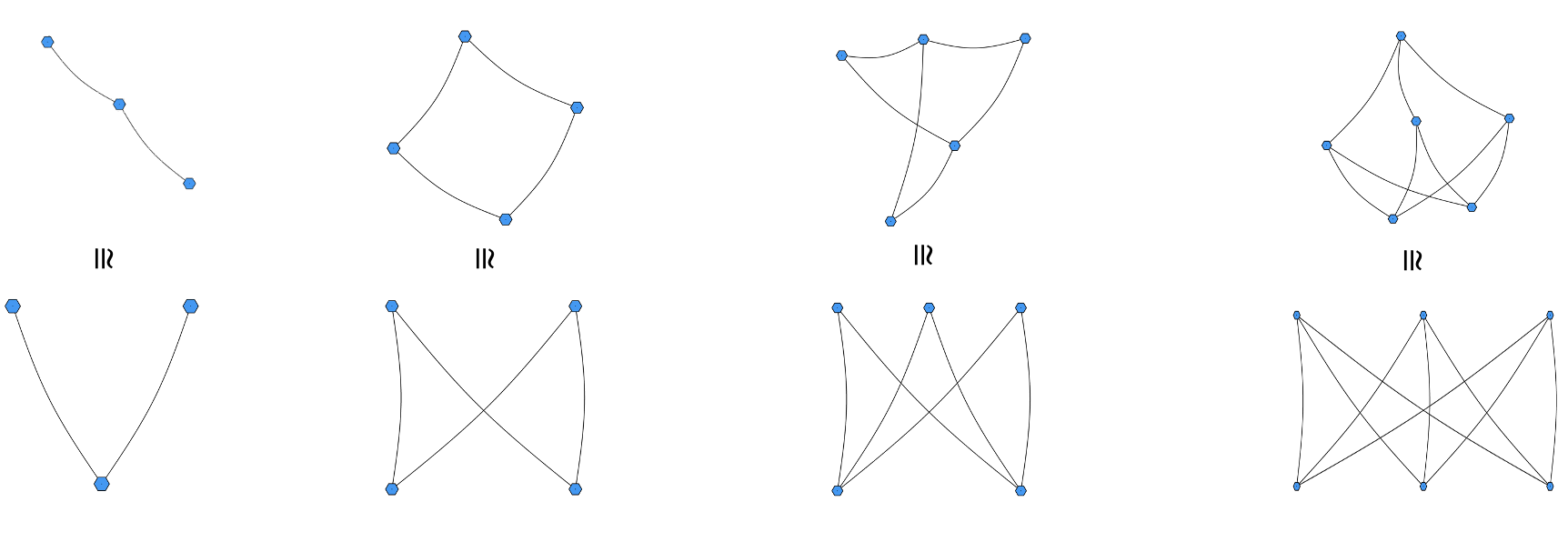}
\]
From here it is not difficult to guess that bipartite graphs provide the optimum. This is indeed the case, a fact known as Tur\'an's theorem for triangles, or Mantel's theorem.\footnote{The question of how one arrives at a \emph{proof} of this theorem is also very interesting, but will not be considered here. See \cite{gowers} for a very interesting discussion of how one might arrive at a proof on a computer.} Note a fascinating and important feature of problems of this type: some global constraint (bipartite) emerges from some local condition (no triangles).

Imagine for a moment that we don't know this theorem, and after some thought failed to come up on our own with an intuition about what graphs are likely best.  This accurately describes the scenario for most problems of this kind, where optimal constructions are not as easy to describe as they are in the case of graphs with no triangles.

We would like to have a general method that can help discover or at least approach these structures more or less by itself.  PatternBoost is such a method.  To get started, we need to come up with two things:
\begin{itemize}
    \item a local search method, and
    \item a score function.
\end{itemize}

The \emph{score function} should be a measure to evaluate how good our constructions are. In the present example, if someone gives us a graph (which may or may not contain triangles), we want to be able to reply to them with a number, where a higher number means that their graph was ``better'' for our problem.  Already at this stage there are choices to make; there is no canonical score function attached to a problem.  We could, for instance, decide to give a score of minus infinity if the graph contains any triangles at all, and return the number of edges otherwise. Another option, slightly less constrained, would be to score the graph $G$ with the function
$$\text{score}(G) = \#\text{edges}(G) - 2 \cdot \#\text{triangles}(G).$$
Often this second score function, which allows for the occasional mistake, leads to slightly better results. Note that if the graph has triangles in it, we can delete any edge from a triangle to make the score increase by (at least) one. Therefore, in both scoring functions, the constructions with the highest possible score  will automatically be triangle-free.

The \emph{local search method} is an algorithm that takes as input a graph which may or may not contain triangles, and outputs a graph whose score is at least as good as the score of the input. Typically, such algorithms proceed by modifying the graph in minor ways, for example by adding and removing edges, attempting to improve the score. This is why they are termed ``local search''. In many cases the local search is not able to improve the score; that is not a problem, as long as it can improve the input when there is an ``obvious'' way to do so.

In this paper, we are going to resist trying to optimize the local search, and instead stick with something very simple, since our primary goal is to illustrate the usefulness of the local-global iteration.  Our local search algorithm will have two phases:
\begin{enumerate}
    \item While there are still triangles in the graph, keep randomly deleting an edge that is in the maximum number of triangles.
    \item Once there are no more triangles in the graph, keep adding random edges to the graph, without creating new triangles, for as long as we can.
\end{enumerate}
A simple ``down and up" mechanism of this kind will be the local search protocol for most of the problems we discuss in the paper.  We will show that PatternBoost does quite well on some problems even with this very simple approach to local search.

=

We now explain the steps of PatternBoost in more detail in the context of the triangle-free graph problem.

\subsection*{Step 1: Create a starter database}

Let us set $n$, the number of vertices, to be 20 in this example. The first step is to create a dataset of graphs which will serve as our starting point.  The higher the scores the better, but any dataset will do. We proceed as follows: start with the empty graph, and run the above simple local search algorithm with this starting point (that is, add edges randomly for as long as we can, without creating triangles). We repeated this $40,000$ times, always starting from the empty graph, to get the score distribution as seen in Figure~\ref{fig:tri_results1}.  (Since the output of local search never has triangles, the score here is just the number of edges.)

The best score we were able to achieve this way was $99$, very close to the optimal value of $100$; this is achieved quite rarely, by two lucky runs out of the $40,000$ tries.  The bulk of the graphs form a smooth distribution peaking at $66$. We retain the top 25\% of scorers from this dataset; these graphs will serve as the training set for our transformer. The distribution of the scores of the training set can be seen in the histogram on the right-hand side of  Figure~\ref{fig:tri_results1}.

\begin{figure}[h!]
    \centering
    \includegraphics[width=0.45\linewidth]{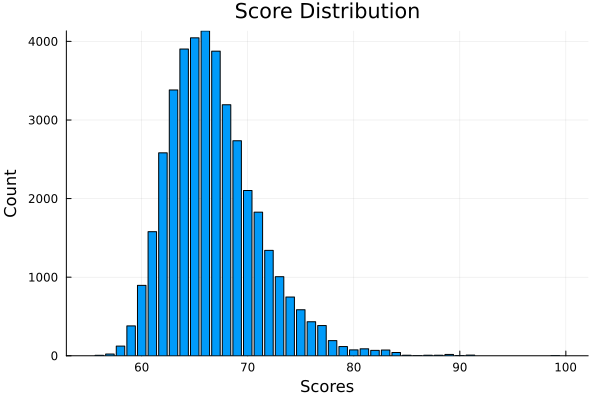}
    \includegraphics[width=0.45\linewidth]{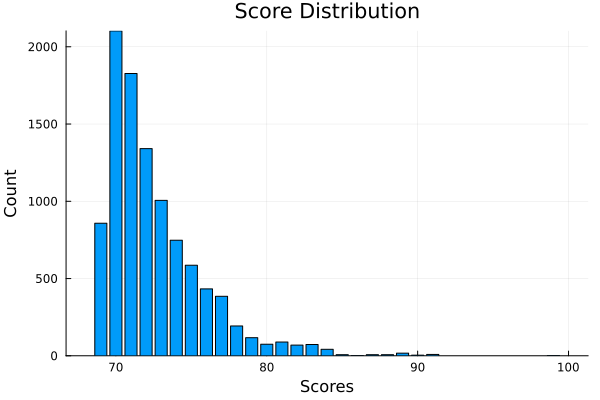}
    \caption{Left: score distribution of all the examples we generated by greedily adding edges for as long as possible. Right: the score distribution of the best 25\% of the examples from the left, which we will use as a training set.}
    \label{fig:tri_results1}
\end{figure}

Every graph in the training set can be represented by its adjacency matrix, which has $n^2=20^2=400$ entries. We can reduce it to $20\cdot 19/2=190$ by noticing that the adjacency matrix is symmetric and there are no loops, so we can use the upper triangular part of the matrix instead of the entire matrix. 

The transformer we're using takes one-dimensional inputs; so we can flatten this upper triangular matrix by writing it out row by row and putting a delimiter (in this case a comma) after every row, as in Figure~\ref{fig:tokenization_illustr}. We emphasize that this is far from the only way we could have presented this structure as a one-dimensional input.  For instance, we could have written the matrix diagonal by diagonal, in which case Figure~\ref{fig:tokenization_illustr} would have read $000,11,0$.  Choices like this really can make a difference in performance, and unfortunately, there is very little understanding at present of which choices are likely to lead to success. 
 In practice one tries everything, or everything one has time to try.

\begin{figure}
    \centering

\begin{tikzpicture}

% Matrix on the left
\node at (0, 0) {$\begin{matrix}
0 & 1 & 0 \\
  & 0 & 1 \\
  &   & 0 \\
\end{matrix}$};

% Arrow pointing to the right
\draw[->] (1.5,0) -- (3.5,0);

% String on the right
\node at (5, 0) {010,01,0};

\end{tikzpicture}
\caption{Flattening the adjacency matrix}
    \label{fig:tokenization_illustr}
\end{figure}

It is often helpful to further compress our data before starting the training process. We can achieve this by \emph{tokenizing} our data. This means that if we notice that the string ``00101'' occurs many times in our dataset, we introduce a new letter, say ``2'', to represent this string. An efficient way to do this is by using Byte-Pair Encoding tokenization, which finds frequently used strings in the dataset and agglomerates them into tokens automatically.  After using BPE with 100 tokens, the maximum word length reduces to 72. Note that not all tokens corresponds to strings of the same length. In our example, the string ``10'' received the name ``token 14'', whereas token 59 corresponded to ``0010000''.

\begin{table}[h!]
\begin{center}
\begin{tabular}{c c | c c}
 token ID & string & token ID & string  \\ 
 1 & ``0'' & 93 & ``0001101''\\  
 2 & ``1'' & 94  & ``11010000''\\
 3 & ``00'' &  95  & ``0100100'' \\
 4 & ``01'' & 96  & ``00010000'' \\
 5 & ``11'' & 97 & ``11010''  \\
 6 & ``100'' & 98 & ``010001'' \\
 7 & ``0100'' & 99 &``101000'' \\
\end{tabular}
\end{center}
\caption{Examples of some of the tokens created by BPE}
\label{tab:bpe_examples}
\end{table}

\subsection*{Step 2: Training the transformer}

We will use Makemore, a simple transformer implementation by Andrej Karpathy~\cite{makemore}. The advantage of his code is that it is openly available and easy to modify to suit our (or your!) purposes, and it gives a solid baseline that we can later try to beat with more sophisticated methods if we choose to do so. 

We used a tiny 2-layer transformer with 4 heads and embedding dimension 16 for this problem.  We train the transformer to produce sequences of tokens which are ``like" those in the initial dataset, in a fashion completely analogous to the means by which a transformer given a large database of English sentences (aka sequences of tokens, most of which are words) can be trained to produce more English sentences.  At every stage of training, the transformer can be asked to predict the next token following a given sequence of $k$ tokens.  In particular, for each $k$ and each graph $G$ in the dataset (represented as a sequence of tokens), we can ask the transformer to predict the $k+1$-st token given the first $k$; the ``loss" is a measure of how often the transformer fails to correctly predict the actual next token in $G$.

After training for 15,000 steps, the train loss went down to 2.07, and the test loss was 2.09.

\subsection*{Step 3: Obtain new constructions from the transformer}

Next, we asked the transformer to produce new constructions that are similar in some ``global'' sense to the best ones we have encountered so far, i.e. the ones in the training set. We produced $100,000$ new graphs this way in tokenized form. After decoding the sequences of tokens into matrices -- or trying to -- we found that $37,000$ had the correct number of entries (190) to be interpreted as the adjacency matrix of a $20$-vertex graph.  In other words, the majority of sequences produced cannot even be scored! This could be a sign that the transformer we used was too small, that the training set was too small so there was not enough data to figure out the meanings of all 100 tokens from context, or that we simply did not train for long enough.

\subsection*{Step 4: Run local search from the new constructions obtained from the transformer}
Let us take the $37,000$ valid constructions we received from our small model, and plug them back into our simple local search algorithm. That is, from each of these $37,000$ graphs, we first greedily delete edges to get rid of all triangles, and then add edges randomly for as long as possible without creating any new triangles. Our hope, and indeed the core idea of our method, is that the transformer has picked up something about the global structure of the best constructions.  This doesn't mean the constructions it produces will score more highly than those it was trained on; in its global view, it may certainly have made some small local mistakes when producing new ones. But it is reasonable to hope that the new constructions produced by the transformers will be much better starting points for the local search than the empty graphs we used in the first step.

And indeed, running the simple local search from the $37,000$ graphs produced by the transformer yields the score 99 a total of 47 times (compared to twice in the previous loop), and the maximum possible score of 100 a total of 46 times! Moreover, all graphs produced this way were different (but of course they are all isomorphic as there is a unique extremal graph up to isomorphism). Figure~\ref{fig:tri_phase2} shows how the score distribution has shifted after only one loop of the process.
\begin{figure}
    \centering
    \includegraphics[width=0.5\linewidth]{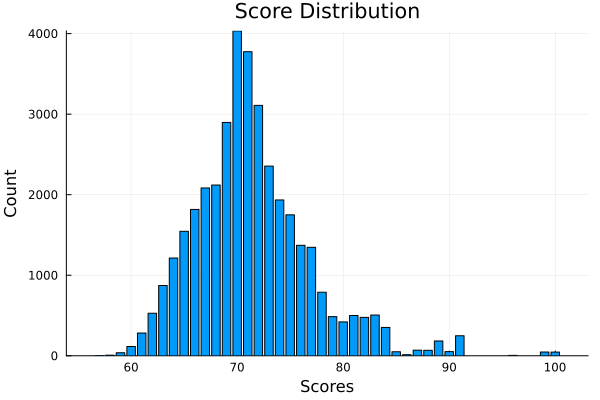}
    \caption{The score distribution in the second generation. The peak has shifted from 66 to 70.}
    \label{fig:tri_phase2}
\end{figure}

\subsection*{Step 5: Repeat this process}

An advantage of the method we described in this section is that it is trivial to iterate. We can repeatedly take the best $10,000$ of the previous generation, tokenize them with the same tokens we used before, and finetune our transformer on this new training set. Note that there is no need to restart the training from scratch every time. By doing 5 more loops, the model quickly learns to produce essentially only complete bipartite graphs, and the majority of these have equal part sizes,  see Figure~\ref{fig:tri_evolution}.  (Note that if the same constructions are created multiple times, they are not counted in these histograms.)

\begin{figure}
    \centering
    \includegraphics[width=0.3\linewidth]{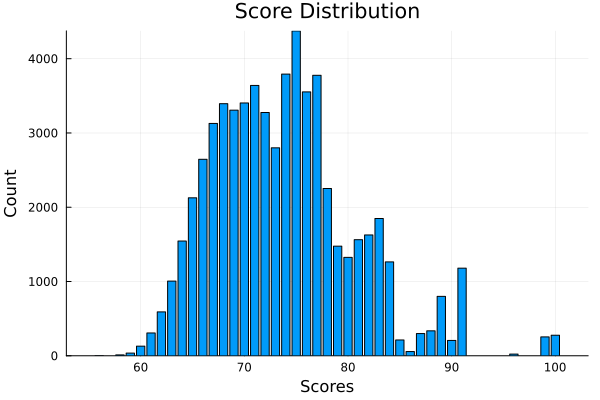}
    \includegraphics[width=0.3\linewidth]{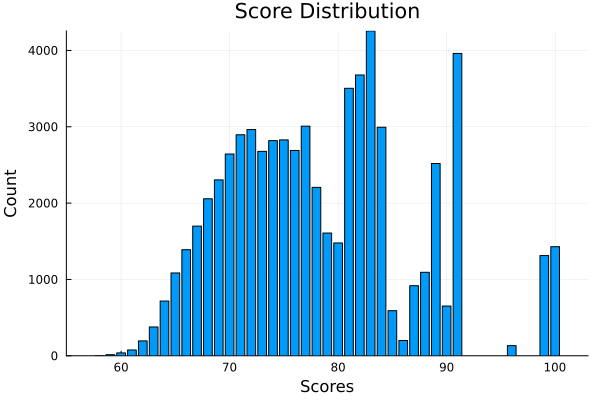}
    \includegraphics[width=0.3\linewidth]{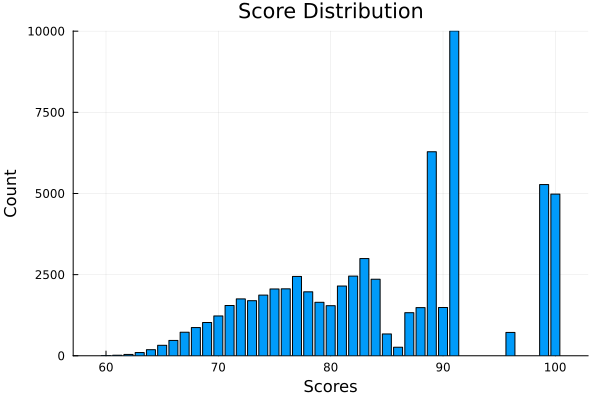}
    \includegraphics[width=0.3\linewidth]{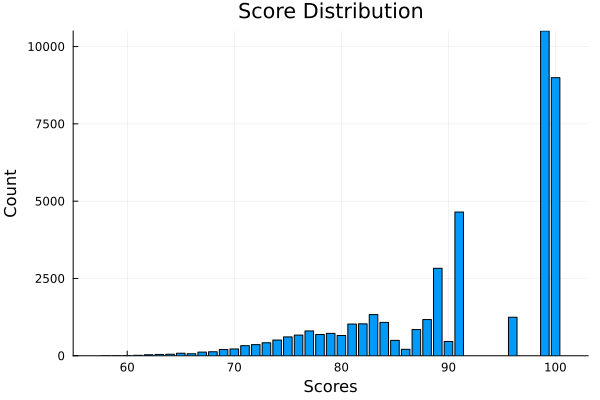}
    \includegraphics[width=0.3\linewidth]{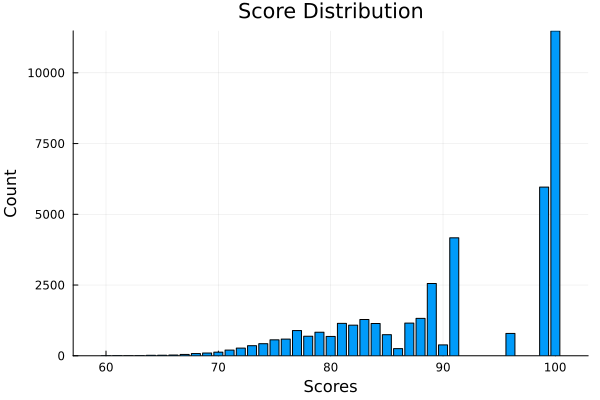}
    \caption{The evolution of scores across the next 5 generations. By the end, the model has learned to generate many different complete bipartite graphs.}
    \label{fig:tri_evolution}
\end{figure}

\subsection*{Discussion}
In this simple example we used a rather small transformer, and worked with tiny training sets. To discover more intricate patterns, as we will need to do in the other problems discussed in this paper, we will need to increase the sizes of both of these. A larger transformer is needed to be able to understand deeper structures, and a larger training set is then necessary for the training to work. This is partially to avoid overfitting, but also to have enough context to deduce the meanings of the tokens, and to have enough data for the transformer to spot global patterns between them and to avoid following ``false leads'' that may be present in small datasets by coincidence.

We have already seen that the PatternBoost alternation between local and global search outperforms local search alone, which most frequently arrives at triangle-free graphs with about 66 edges.  What about global search alone?  That is, what happens if we generate many random examples, but without improving them by local search at all, select the 1 percent of graphs which score best, pass those back to Makemore, and iterate that process?  This turns out to be disastrously bad; the largest triangle-free graph it finds, after fifty loops, has only 48 edges.  (Though better performance from a global-only approach was seen in \cite{wagner2021constructions}, which used a simpler and thus faster neural network for the global step and which was therefore able to run thousands rather than dozens of loops.)

\section{Hard and easy problems}

The question of which problems are most amenable to investigation by machine learning methods is only beginning to be understood.  We are still at the stage where we simply have to try things and see what works. In the present project, we noticed that PatternBoost clearly works better for some problems than for others. In this section we'll focus on three problems which highlight this variation when considered together. The first problem is an example where our method, while it clearly outperforms the pure local search approach, does not compete well with the best human efforts; the second will be an example where PatternBoost is able to learn the best structures, dramatically improves over the pure local search approach, and almost matches the best results we were able to achieve with algorithms tailored to this specific problem.  And in the third problem, PatternBoost is able to arrive at a  solution which refutes a long-standing conjecture.

Later in Section~\ref{sec:other_problems} we will see several more examples, which we'll treat more briefly, where PatternBoost improved on the best previously known results and led to new discoveries in mathematics.

\subsection{The no-squares problem}\label{subsec:nosquares}

Let us consider the following problem, very similar in nature to the example problem of triangle-free graphs with many edges that we considered in Section~\ref{sec:trifree}.  At most how many edges can a graph on $n$ vertices have, if it doesn't contain a cycle of length 4? This, and similar problems seem to be notoriously difficult for machine learning methods to grasp~\cite{mehrabian2023finding}. While this problem is significantly harder both mathematically and computationally than the same problem with triangles, a lot is known about it~\cite{furedi2013history}, and in particular the exact optimal number of edges is known up to $n=40$~\cite[A006855]{oeis}. 

We will fix the number of vertices at $n=33$ and use the same simple local search as we used in Section~\ref{sec:trifree} to illustrate our method: given any graph as input, we first delete edges if necessary, greedily, until there are no more 4-cycles left, and then we keep adding edges randomly for as long as possible. To establish a baseline result, we ran 50 million local searches starting from the empty graph; the result are in Figure~\ref{fig:4cycle_baseline}.

\begin{figure}
\begin{minipage}{0.54\textwidth} 
\includegraphics[width=\textwidth]{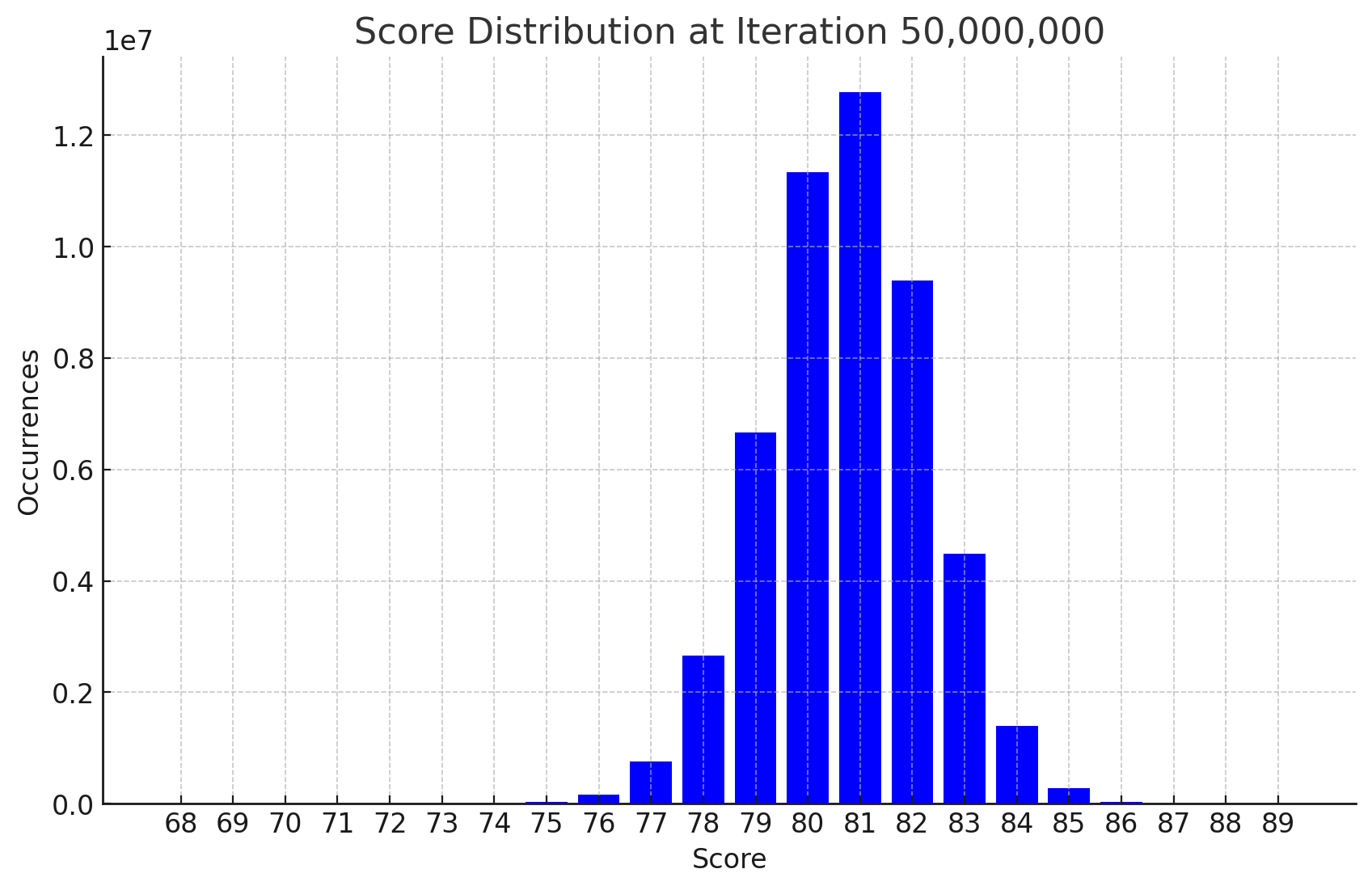}%\label{fig:teaser1}
    \small
    \caption{The results of adding random edges to empty graph for as long as possible, without creating any 4-cycles, 50M times in total.}
    \label{fig:4cycle_baseline}
\end{minipage}\hfill 
\begin{minipage}{0.44\textwidth}
\includegraphics[width=\textwidth]%{two_samples_gcd_100.pdf}%\label{fig:teaser2}
{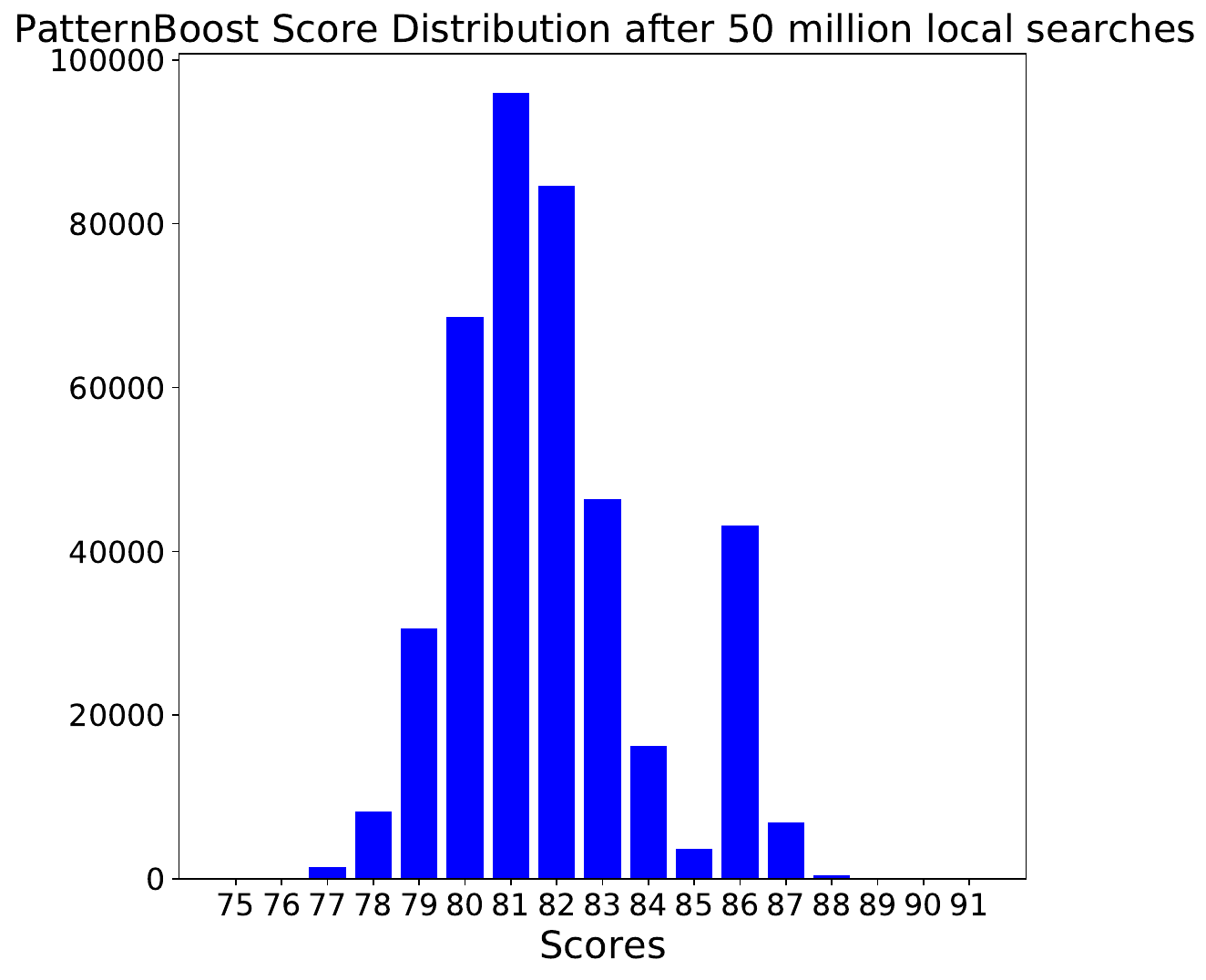}
    \small
    \caption{Results with PatternBoost, 50M local searches in total.}
    \label{fig:4cycle_boost1}
    \vspace{0.5cm}
\end{minipage}
\end{figure}

The lowest score was 68, the highest was 89, and the distribution of results peaks at 81. In particular, in 50M attempts, we obtained 5 graphs with score 89, 204 graphs with score 88, and 3425 graphs with score 87. The maximum possible achievable score for $n=33$ is 96~\cite[A006855]{oeis}.

\subsubsection{A first experiment}

Let us run PatternBoost on this problem, using the same local-global method that we used in Section~\ref{sec:trifree}. We generate a starting dataset, train a transformer on it, ask the transformer to output new graphs that are similar to the graphs in the training set, improve these with local search to obtain a new dataset, and repeat. Note that we do not need to start the training of the transformer from scratch in every loop. As long as we use the same tokenization in each step, we can simply continue training the trained model from the previous step.

Using the same parameters as above (a transformer with $2$ layers, $4$ heads and $16$ dimensions), we ran $4$ experiments, with different (random) model initializations, and a vocabulary of $100$ BPE tokens. After each generation, the transformer model created $500,000$ candidate solutions. These were tested for correctness (i.e. once decoded, the model prediction must be a valid adjacency matrix for a graph with $33$ vertices), then the correct solutions were improved using local search and added to the $50$k best solutions previously found (i.e. used to train the model). After $50$ million local searches ($158$ generations), the best of our $4$ models generated one graph with $91$ edges, found after $20.5$ million local searches, $21$ graphs with $89$ edges, $454$ with $88$ and $6,957$ with $87$, a significant improvement over our previous results. Figure~\ref{fig:4cycle_boost1} presents the scores of the candidates generated by the model after $158$ generations (about $50$ million local searches). The distribution is bimodal. The low mode ($81$) corresponds to the data generated by the transformer during this generation, its scores are slightly higher than those from the baseline distribution (Figure~\ref{fig:4cycle_baseline}). The high mode ($86$) corresponds to good candidates, accumulated from previous generations, which are kept as training examples.

\subsubsection{The advantage of larger models, and better tokenizers}

The previous experiments rely on very small transformers ($2$ layers, $16$ dimensions), and a simple tokenization scheme (BPE-encoding of the adjacency matrix). Better performances can be achieved with larger models. A transformer with $256$ dimensions, $6$ layers and $8$ heads finds a solution with $91$ edges after less than a million local searches, vs $8.6$ millions for our fastest model with $16$ dimensions, $2$ layers and $4$ heads. 

During the generation phase, our models tend to create a lot of invalid solutions. For a model-generated sequence to map to a valid candidate, it must decode into a sequence of $n(n-1)/2$ binary entries -- the adjacency matrix of a graph with $n$ vertices. When using BPE-tokenization, tokens correspond to a variable number of bits. In order to output a valid graph, the transformer must learn to add the length of its output tokens, which proves a hard task for Makemore, no matter the size of the model. In previous experiments, even after many generations, about a third of model predictions were incorrect. This makes learning slower, and less efficient.

A very simple change in graph representation allows for much better model prediction. Before tokenization, we add a delimiter at the end of every line in the adjacency matrix. Instead of representing the adjacency matrix 
$$\begin{matrix}
0 & 1 & 0 \\
  & 0 & 1 \\
  &   & 0 \\
\end{matrix}$$
as the sequence $\texttt{010010}$, we represent it as $\texttt{010,01,0,}$. The BPE-tokenizer is now run on sequences with a vocabulary of $3$ ($\{`0',`1',`,'\}$), and the sequences are longer (by $n$ entries), but this greatly helps Makemore to output valid sequence: only $5-10\%$ of model prediction are now invalid.

Using this new tokenizer, we trained $40$ models with $128$ dimensions, $4$ heads and $4$ and $6$ layers, for $n=33$. After $50$ million local searches, one model had found a graph with $93$ edges, and $2$ had found graphs with $92$ edges. A maximal graph with $96$ edges was discovered after $116.5$ million local searches, using a $6$-layer transformer, with dimension $128$ and $4$ attention heads.

Table~\ref{tab:nosquare_dist} compares the distributions of solutions found by local search and PatternBoost, after $50$ million local search steps. The best candidates found with local search only have $89$ edges. In contrast, PatternBoost finds graphs with $91$ edges and, its improved versions,  discovers over many good candidates: $2750$ graphs with $89$ edges or more, vs $22$ in the base version of PatternBoost, and $5$ wiht local search alone).

\begin{table}[h]

\centering
\small
\begin{tabular}{c|c|cc}
\toprule
 & &\multicolumn{2}{c}{PatternBoost}\\
 & Local search & $2$ layers $16$ dimensions & $4$ layers $128$ dimensions \\
Edges& & & improved tokenizer\\
\midrule
91 & 0 & 1 & 15 \\ 
90 & 0 & 0 & 206 \\
89 & 5 & 21 & 2,529 \\ 
88 & 204 & 454 & 26,842 \\
87 & 3,425 & 6,957 & 175,089 \\
\bottomrule
\end{tabular}
\caption{\small \textbf{No-squares problem, $n=33$}. Distribution of the number of edges of candidate solutions, after 50 million local searches.}
\label{tab:nosquare_dist}
\end{table}

Figure~\ref{fig:learningcurves} presents the learning curves of $20$ different models, with $6$ layers, $128$ dimensions and $4$ heads, only differing by the initialization of their weights, and the seed of their random number generator. For each generation, we compute the average number of edges of the $10,000$ best solutions found so far (left), and the best candidate found (right). For all models, the population of candidates improves  over time: new and better constructions are generated, as makemore is fine-tuned on better and better sets of candidates.

Yet, all models do not perform the same. In this case, two models learn significantly faster. It takes them about $50$ generations to discover $10,000$ graphs with more than $88$ edges (and no $4$-cycles), whereas the other models need $120$ to $200$ generations to achieve the same performance. Interestingly, these two models are the first to discover a graph with $92$ edges, and the only ones to discover graphs with $93$ edges or more. This confirms the importance of the successive fine-tuning of the transformer models for achieving the best results: if discovering the best solutions was, for the most part, an effect of ``lucky draws'' during local search, they would not be specific to the fastest learning models. 

These results also suggest a possible strategy for optimising PatternBoost, when a large amount of computing resources (GPU) can be used for a short period of time. Since the speed of learning seems to be a good advanced indicator of future model performance, we could run a large number of models, with different random initializations, for a few hours, and keep the fastest learners.

\begin{figure}[h!]
    \centering
    \includegraphics[width=0.8\linewidth]{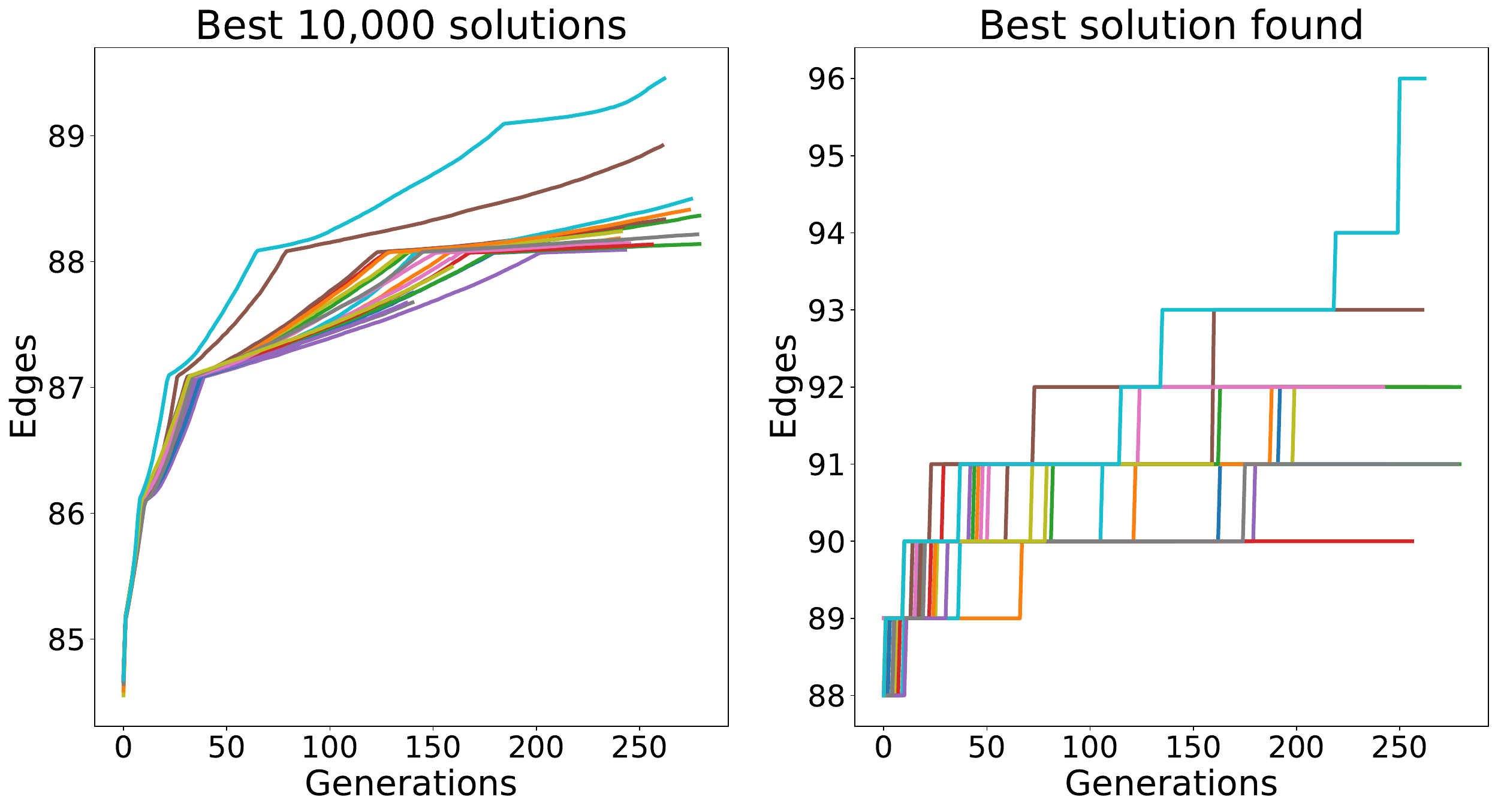}
    \caption{Evolution of scores over generations, for 20 different models. All models have 6 layers, and 128 dimensions, and only differ by their initialization. Left: average score of the 10,000 best solutions found so far. Right: best solution found so far. }
    \label{fig:learningcurves}
\end{figure}

\subsubsection{Scaling PatternBoost}

The no-squares problem becomes increasingly hard as $n$, the number of vertices, increases. To assess the capability of PatternBoost to scale to larger problems, we trained models with $128$ dimensions, $4$ or $6$ layers and the new tokenizer, for problems of increasing size: from $n=20$ to $n=55$. PatternBoost could find a best solution for all $n\leq 30$ in less than $80$ million local searches, and a best solution for $n=33$ was found after $116.5$ million local searches.
Table~\ref{tab:nosquare_scaling} summarizes our results.

\begin{table}[h!]

\centering
\small
\begin{tabular}{lc|cc|| lc|cc }
\toprule
$n$ & Best& Best found& \#Searches & $n$ & Best& Best found& \#Searches \\
\midrule
20 & 46 & \textbf{46}& 0.4 & 38 & 117 & 110 & 77.1\\ 
21 & 50 & \textbf{50}& 0.8 & 39 & 122 & 113 & 11.7\\
22 & 52 & \textbf{52}& 0.4 & 40 & 127 & 117 & 11.2\\
23 & 56 & \textbf{56}& 2.3 & 41 & $\geq$132 & 102 & 8.6\\
24 & 59 & \textbf{59}& 1.8 & 42 & $\geq$137 & 124 & 6.8 \\
25 & 63 & \textbf{63}& 3.1 & 43 & $\geq$142 & 128 & 15.6\\
26 & 67 & \textbf{67} & 8.4 & 44 & $\geq$148 & 132 & 12.1\\
27 & 71 &\textbf{71} & 21.0 & 45 & $\geq$154 & 136 & 11.1\\
28 & 76 & \textbf{76}& 23.8 & 46 & $\geq$157 & 140 & 10.4\\
29 & 80 & \textbf{80}& 41.7 & 47 & $\geq$163 & 144 & 12.1\\
30 & 85 & \textbf{85} & 78.3& 48 & $\geq$168 & 147 & 1.9\\ 
31 & 90 & 87 & 126.3 & 49 & $\geq$174 & 152 & 12.2\\ 
32 & 92 & 91 & 104.9 & 50 & ? & 156 & 15.2 \\
33 & 96 & \textbf{96} & 116.5 & 51 & ? & 161 & 14.7 \\
34 & 102 & 95 & 39.9 & 52 & ? & 165 & 14.1\\
35 & 106 & 100 & 121.3 & 53 & ? & 169 & 5.2\\
36 & 110 & 102 & 38.4 & 54 & ? & 173 & 9.0 \\
37 & 113 & 106 & 29.8 & 55 & ? & 177 & 8.2\\
\bottomrule
\end{tabular}
\caption{\small \textbf{No-squares problem}. Best solutions found by PatternBoost, and number of local searches (millions). Best of $6$ models for $n<39$ (80 for $n=33$), of $12$ models for $n\geq 39$.}
\label{tab:nosquare_scaling}
\end{table}

\subsection{The 312-avoiding permanent problem}

Previously, in Section~\ref{subsec:nosquares}, we saw a problem that was rather difficult for our method (or indeed any other machine learning methods we know of) to grasp. Let us now see an example where our method works much better and  significantly improves over the local search baseline. Even by using the simplest possible local search algorithm and no problem-specific insights built into it, PatternBoost  almost matches the performance of the best specialized, tailored algorithm we could come up with for this problem.

Let us say that a binary matrix $A$ \emph{contains the pattern 312} if there are three column indices $c_1 < c_2 < c_3$ and three row indices $r_1 < r_2 < r_3$, such that the entries at positions $(r_1,c_3)$, $(r_2,c_1)$, and $(r_3,c_2)$ are all ones. Recall that the \emph{permanent} of an $n\times n$ matrix $A$ is defined as
$$\text{per}(A) = \sum_{\sigma\in S_n}\prod_{i=1}^n a_{i,\sigma(i)},$$
where $S_n$ denotes the set of all permutations of $\{1,2,\ldots,n\}$.

The problem we will consider here is as follows:
\begin{question}[Brualdi--Cao~\cite{brualdi2020pattern}]
    How large can the permanent of a binary $n\times n$ matrix be, that does not contain the pattern 312?
\end{question}
For partial results on this problem, and the best answers for small values of $n$, see~\cite{wagner2021constructions}.  In particular, it is known that the maximum value is between the exponentials $2^{0.89n}$ and $((24)^{1/4})^n.$

We compared the performance of the following three algorithms on this problem, for $n=25$. 
\begin{enumerate}
    \item \textbf{Simple local search} The first one is an extremely simplistic random greedy search. We start from the empty matrix and keep adding ones in random positions for as long as possible, without creating any 312 patterns. This algorithm performed very badly: after running this search 30,000 times, always starting from the empty matrix and adding ones randomly for as long as we can, the highest permanent was still only 641,000. %[transformer_project/perm_new/perm_only_local].
    \item \textbf{PatternBoost} The second algorithm is the method of this paper, combined with the simple local search from the previous point. We train a transformer on the best constructions we have encountered so far, then ask it to generate new constructions. From each construction generated by the transformer we first greedily delete ones if needed, to get rid of any potential 312s, and then add ones in random positions for as long as we can, without creating any new 312s, and then repeat.
    \item \textbf{Handcrafted specialized search method} The third algorithm is a handcrafted search not using ML, that we created specifically for this problem, using lots of human insights, ad hoc intuitions about the best constructions, and any other tricks we could think of. 
\end{enumerate}

Let us call the third method above the ``human method'' (as it is a standard algorithm crafted by humans, and typically the technique currently used on such problems).
Note that PatternBoost is completely generic, it has (almost) nothing built into it that is specific to this problem, whereas the human method is a very specialized algorithm that would not be applicable to any other problem. We had a friendly competition on whether we can outperform the generic ML method with our handcrafted problem-specific algorithms, and after two weeks and a tight race the ``human'' method eked out a slight win: it found a matrix with permanent 5,200,384, whereas PatternBoost was only able to achieve 5,101,230. This shows that handcrafted specialized algorithms can still outperform generic ML methods. Remarkably the top 10 constructions found by these two methods were completely disjoint sets, and the human method has only found 8 matrices that were better than the best construction found by our ML method!

\begin{figure}[h]
    \centering
    \begin{tikzpicture}[scale=0.75]
\draw[step=0.3cm,gray,thin] (0,0) grid (7.5,7.5);
\fill[black!60!white] (0.015,7.484999999999999) rectangle (0.285,7.215);
\fill[black!60!white] (0.315,7.484999999999999) rectangle (0.585,7.215);
\fill[black!60!white] (0.6149999999999999,7.484999999999999) rectangle (0.885,7.215);
\fill[black!60!white] (0.315,7.185) rectangle (0.585,6.915);
\fill[black!60!white] (0.6149999999999999,7.185) rectangle (0.885,6.915);
\fill[black!60!white] (0.9149999999999999,7.185) rectangle (1.185,6.915);
\fill[black!60!white] (0.315,6.885) rectangle (0.585,6.615);
\fill[black!60!white] (0.6149999999999999,6.885) rectangle (0.885,6.615);
\fill[black!60!white] (0.9149999999999999,6.885) rectangle (1.185,6.615);
\fill[black!60!white] (1.2149999999999999,6.885) rectangle (1.485,6.615);
\fill[black!60!white] (0.015,6.585) rectangle (0.285,6.315);
\fill[black!60!white] (0.315,6.585) rectangle (0.585,6.315);
\fill[black!60!white] (0.9149999999999999,6.585) rectangle (1.185,6.315);
\fill[black!60!white] (1.2149999999999999,6.585) rectangle (1.485,6.315);
\fill[black!60!white] (0.015,6.284999999999999) rectangle (0.285,6.015);
\fill[black!60!white] (1.2149999999999999,6.284999999999999) rectangle (1.485,6.015);
\fill[black!60!white] (1.515,6.284999999999999) rectangle (1.785,6.015);
\fill[black!60!white] (1.815,6.284999999999999) rectangle (2.085,6.015);
\fill[black!60!white] (1.515,5.984999999999999) rectangle (1.785,5.715);
\fill[black!60!white] (1.815,5.984999999999999) rectangle (2.085,5.715);
\fill[black!60!white] (2.1149999999999998,5.984999999999999) rectangle (2.385,5.715);
\fill[black!60!white] (2.415,5.984999999999999) rectangle (2.6849999999999996,5.715);
\fill[black!60!white] (2.1149999999999998,5.685) rectangle (2.385,5.415);
\fill[black!60!white] (2.415,5.685) rectangle (2.6849999999999996,5.415);
\fill[black!60!white] (2.7150000000000003,5.685) rectangle (2.985,5.415);
\fill[black!60!white] (2.415,5.385) rectangle (2.6849999999999996,5.115);
\fill[black!60!white] (2.7150000000000003,5.385) rectangle (2.985,5.115);
\fill[black!60!white] (3.015,5.385) rectangle (3.2849999999999997,5.115);
\fill[black!60!white] (2.1149999999999998,5.085) rectangle (2.385,4.815);
\fill[black!60!white] (2.415,5.085) rectangle (2.6849999999999996,4.815);
\fill[black!60!white] (2.7150000000000003,5.085) rectangle (2.985,4.815);
\fill[black!60!white] (3.015,5.085) rectangle (3.2849999999999997,4.815);
\fill[black!60!white] (1.815,4.784999999999999) rectangle (2.085,4.515);
\fill[black!60!white] (2.1149999999999998,4.784999999999999) rectangle (2.385,4.515);
\fill[black!60!white] (3.015,4.784999999999999) rectangle (3.2849999999999997,4.515);
\fill[black!60!white] (3.315,4.784999999999999) rectangle (3.5849999999999995,4.515);
\fill[black!60!white] (1.515,4.484999999999999) rectangle (1.785,4.215);
\fill[black!60!white] (1.815,4.484999999999999) rectangle (2.085,4.215);
\fill[black!60!white] (3.015,4.484999999999999) rectangle (3.2849999999999997,4.215);
\fill[black!60!white] (3.315,4.484999999999999) rectangle (3.5849999999999995,4.215);
\fill[black!60!white] (0.015,4.185) rectangle (0.285,3.915);
\fill[black!60!white] (1.2149999999999999,4.185) rectangle (1.485,3.915);
\fill[black!60!white] (1.515,4.185) rectangle (1.785,3.915);
\fill[black!60!white] (3.315,4.185) rectangle (3.5849999999999995,3.915);
\fill[black!60!white] (3.615,4.185) rectangle (3.885,3.915);
\fill[black!60!white] (3.915,4.185) rectangle (4.185,3.915);
\fill[black!60!white] (3.615,3.885) rectangle (3.885,3.615);
\fill[black!60!white] (3.915,3.885) rectangle (4.185,3.615);
\fill[black!60!white] (4.215,3.885) rectangle (4.484999999999999,3.615);
\fill[black!60!white] (4.515,3.885) rectangle (4.784999999999999,3.615);
\fill[black!60!white] (4.215,3.5849999999999995) rectangle (4.484999999999999,3.315);
\fill[black!60!white] (4.515,3.5849999999999995) rectangle (4.784999999999999,3.315);
\fill[black!60!white] (4.815,3.5849999999999995) rectangle (5.085,3.315);
\fill[black!60!white] (4.215,3.2849999999999997) rectangle (4.484999999999999,3.015);
\fill[black!60!white] (4.515,3.2849999999999997) rectangle (4.784999999999999,3.015);
\fill[black!60!white] (4.815,3.2849999999999997) rectangle (5.085,3.015);
\fill[black!60!white] (5.115,3.2849999999999997) rectangle (5.385,3.015);
\fill[black!60!white] (3.915,2.985) rectangle (4.185,2.7150000000000003);
\fill[black!60!white] (4.215,2.985) rectangle (4.484999999999999,2.7150000000000003);
\fill[black!60!white] (4.815,2.985) rectangle (5.085,2.7150000000000003);
\fill[black!60!white] (5.115,2.985) rectangle (5.385,2.7150000000000003);
\fill[black!60!white] (3.915,2.6849999999999996) rectangle (4.185,2.415);
\fill[black!60!white] (5.115,2.6849999999999996) rectangle (5.385,2.415);
\fill[black!60!white] (5.415,2.6849999999999996) rectangle (5.685,2.415);
\fill[black!60!white] (3.615,2.385) rectangle (3.885,2.1149999999999998);
\fill[black!60!white] (3.915,2.385) rectangle (4.185,2.1149999999999998);
\fill[black!60!white] (5.115,2.385) rectangle (5.385,2.1149999999999998);
\fill[black!60!white] (5.415,2.385) rectangle (5.685,2.1149999999999998);
\fill[black!60!white] (3.615,2.085) rectangle (3.885,1.815);
\fill[black!60!white] (5.415,2.085) rectangle (5.685,1.815);
\fill[black!60!white] (5.715,2.085) rectangle (5.984999999999999,1.815);
\fill[black!60!white] (6.015,2.085) rectangle (6.284999999999999,1.815);
\fill[black!60!white] (5.715,1.785) rectangle (5.984999999999999,1.515);
\fill[black!60!white] (6.015,1.785) rectangle (6.284999999999999,1.515);
\fill[black!60!white] (6.315,1.785) rectangle (6.585,1.515);
\fill[black!60!white] (6.615,1.785) rectangle (6.885,1.515);
\fill[black!60!white] (6.315,1.485) rectangle (6.585,1.2149999999999999);
\fill[black!60!white] (6.615,1.485) rectangle (6.885,1.2149999999999999);
\fill[black!60!white] (6.915,1.485) rectangle (7.185,1.2149999999999999);
\fill[black!60!white] (6.615,1.185) rectangle (6.885,0.9149999999999999);
\fill[black!60!white] (6.915,1.185) rectangle (7.185,0.9149999999999999);
\fill[black!60!white] (7.215,1.185) rectangle (7.484999999999999,0.9149999999999999);
\fill[black!60!white] (6.315,0.885) rectangle (6.585,0.6149999999999999);
\fill[black!60!white] (6.615,0.885) rectangle (6.885,0.6149999999999999);
\fill[black!60!white] (6.915,0.885) rectangle (7.185,0.6149999999999999);
\fill[black!60!white] (7.215,0.885) rectangle (7.484999999999999,0.6149999999999999);
\fill[black!60!white] (5.715,0.585) rectangle (5.984999999999999,0.315);
\fill[black!60!white] (6.015,0.585) rectangle (6.284999999999999,0.315);
\fill[black!60!white] (6.315,0.585) rectangle (6.585,0.315);
\fill[black!60!white] (7.215,0.585) rectangle (7.484999999999999,0.315);
\fill[black!60!white] (0.015,0.285) rectangle (0.285,0.015000000000000013);
\fill[black!60!white] (3.315,0.285) rectangle (3.5849999999999995,0.015000000000000013);
\fill[black!60!white] (3.615,0.285) rectangle (3.885,0.015000000000000013);
\fill[black!60!white] (5.415,0.285) rectangle (5.685,0.015000000000000013);
\fill[black!60!white] (5.715,0.285) rectangle (5.984999999999999,0.015000000000000013);
\fill[black!60!white] (7.215,0.285) rectangle (7.484999999999999,0.015000000000000013);
\node[] at (3.75,-0.7) {  $\text{per}\left(A_{25}\right)=5101230$};\end{tikzpicture}
    \begin{tikzpicture}[scale=0.75]
\draw[step=0.3cm,gray,thin] (0,0) grid (7.5,7.5);
\fill[black!60!white] (0.015,7.484999999999999) rectangle (0.285,7.215);
\fill[black!60!white] (0.315,7.484999999999999) rectangle (0.585,7.215);
\fill[black!60!white] (0.6149999999999999,7.484999999999999) rectangle (0.885,7.215);
\fill[black!60!white] (0.9149999999999999,7.484999999999999) rectangle (1.185,7.215);
\fill[black!60!white] (0.6149999999999999,7.185) rectangle (0.885,6.915);
\fill[black!60!white] (0.9149999999999999,7.185) rectangle (1.185,6.915);
\fill[black!60!white] (1.2149999999999999,7.185) rectangle (1.485,6.915);
\fill[black!60!white] (0.6149999999999999,6.885) rectangle (0.885,6.615);
\fill[black!60!white] (0.9149999999999999,6.885) rectangle (1.185,6.615);
\fill[black!60!white] (1.2149999999999999,6.885) rectangle (1.485,6.615);
\fill[black!60!white] (1.515,6.885) rectangle (1.785,6.615);
\fill[black!60!white] (0.315,6.585) rectangle (0.585,6.315);
\fill[black!60!white] (0.6149999999999999,6.585) rectangle (0.885,6.315);
\fill[black!60!white] (1.2149999999999999,6.585) rectangle (1.485,6.315);
\fill[black!60!white] (1.515,6.585) rectangle (1.785,6.315);
\fill[black!60!white] (0.315,6.284999999999999) rectangle (0.585,6.015);
\fill[black!60!white] (1.515,6.284999999999999) rectangle (1.785,6.015);
\fill[black!60!white] (1.815,6.284999999999999) rectangle (2.085,6.015);
\fill[black!60!white] (0.015,5.984999999999999) rectangle (0.285,5.715);
\fill[black!60!white] (0.315,5.984999999999999) rectangle (0.585,5.715);
\fill[black!60!white] (1.515,5.984999999999999) rectangle (1.785,5.715);
\fill[black!60!white] (1.815,5.984999999999999) rectangle (2.085,5.715);
\fill[black!60!white] (0.015,5.685) rectangle (0.285,5.415);
\fill[black!60!white] (1.815,5.685) rectangle (2.085,5.415);
\fill[black!60!white] (2.1149999999999998,5.685) rectangle (2.385,5.415);
\fill[black!60!white] (2.415,5.685) rectangle (2.6849999999999996,5.415);
\fill[black!60!white] (2.1149999999999998,5.385) rectangle (2.385,5.115);
\fill[black!60!white] (2.415,5.385) rectangle (2.6849999999999996,5.115);
\fill[black!60!white] (2.7150000000000003,5.385) rectangle (2.985,5.115);
\fill[black!60!white] (2.1149999999999998,5.085) rectangle (2.385,4.815);
\fill[black!60!white] (2.415,5.085) rectangle (2.6849999999999996,4.815);
\fill[black!60!white] (2.7150000000000003,5.085) rectangle (2.985,4.815);
\fill[black!60!white] (3.015,5.085) rectangle (3.2849999999999997,4.815);
\fill[black!60!white] (0.015,4.784999999999999) rectangle (0.285,4.515);
\fill[black!60!white] (1.815,4.784999999999999) rectangle (2.085,4.515);
\fill[black!60!white] (2.1149999999999998,4.784999999999999) rectangle (2.385,4.515);
\fill[black!60!white] (2.7150000000000003,4.784999999999999) rectangle (2.985,4.515);
\fill[black!60!white] (3.015,4.784999999999999) rectangle (3.2849999999999997,4.515);
\fill[black!60!white] (0.015,4.484999999999999) rectangle (0.285,4.215);
\fill[black!60!white] (3.015,4.484999999999999) rectangle (3.2849999999999997,4.215);
\fill[black!60!white] (3.315,4.484999999999999) rectangle (3.5849999999999995,4.215);
\fill[black!60!white] (3.615,4.484999999999999) rectangle (3.885,4.215);
\fill[black!60!white] (3.315,4.185) rectangle (3.5849999999999995,3.915);
\fill[black!60!white] (3.615,4.185) rectangle (3.885,3.915);
\fill[black!60!white] (3.915,4.185) rectangle (4.185,3.915);
\fill[black!60!white] (3.015,3.885) rectangle (3.2849999999999997,3.615);
\fill[black!60!white] (3.315,3.885) rectangle (3.5849999999999995,3.615);
\fill[black!60!white] (3.615,3.885) rectangle (3.885,3.615);
\fill[black!60!white] (3.915,3.885) rectangle (4.185,3.615);
\fill[black!60!white] (3.015,3.5849999999999995) rectangle (3.2849999999999997,3.315);
\fill[black!60!white] (3.915,3.5849999999999995) rectangle (4.185,3.315);
\fill[black!60!white] (4.215,3.5849999999999995) rectangle (4.484999999999999,3.315);
\fill[black!60!white] (4.515,3.5849999999999995) rectangle (4.784999999999999,3.315);
\fill[black!60!white] (4.215,3.2849999999999997) rectangle (4.484999999999999,3.015);
\fill[black!60!white] (4.515,3.2849999999999997) rectangle (4.784999999999999,3.015);
\fill[black!60!white] (4.815,3.2849999999999997) rectangle (5.085,3.015);
\fill[black!60!white] (4.215,2.985) rectangle (4.484999999999999,2.7150000000000003);
\fill[black!60!white] (4.515,2.985) rectangle (4.784999999999999,2.7150000000000003);
\fill[black!60!white] (4.815,2.985) rectangle (5.085,2.7150000000000003);
\fill[black!60!white] (5.115,2.985) rectangle (5.385,2.7150000000000003);
\fill[black!60!white] (4.815,2.6849999999999996) rectangle (5.085,2.415);
\fill[black!60!white] (5.115,2.6849999999999996) rectangle (5.385,2.415);
\fill[black!60!white] (5.415,2.6849999999999996) rectangle (5.685,2.415);
\fill[black!60!white] (5.715,2.6849999999999996) rectangle (5.984999999999999,2.415);
\fill[black!60!white] (5.415,2.385) rectangle (5.685,2.1149999999999998);
\fill[black!60!white] (5.715,2.385) rectangle (5.984999999999999,2.1149999999999998);
\fill[black!60!white] (6.015,2.385) rectangle (6.284999999999999,2.1149999999999998);
\fill[black!60!white] (5.415,2.085) rectangle (5.685,1.815);
\fill[black!60!white] (5.715,2.085) rectangle (5.984999999999999,1.815);
\fill[black!60!white] (6.015,2.085) rectangle (6.284999999999999,1.815);
\fill[black!60!white] (6.315,2.085) rectangle (6.585,1.815);
\fill[black!60!white] (5.115,1.785) rectangle (5.385,1.515);
\fill[black!60!white] (5.415,1.785) rectangle (5.685,1.515);
\fill[black!60!white] (6.015,1.785) rectangle (6.284999999999999,1.515);
\fill[black!60!white] (6.315,1.785) rectangle (6.585,1.515);
\fill[black!60!white] (5.115,1.485) rectangle (5.385,1.2149999999999999);
\fill[black!60!white] (6.315,1.485) rectangle (6.585,1.2149999999999999);
\fill[black!60!white] (6.615,1.485) rectangle (6.885,1.2149999999999999);
\fill[black!60!white] (6.915,1.485) rectangle (7.185,1.2149999999999999);
\fill[black!60!white] (6.615,1.185) rectangle (6.885,0.9149999999999999);
\fill[black!60!white] (6.915,1.185) rectangle (7.185,0.9149999999999999);
\fill[black!60!white] (7.215,1.185) rectangle (7.484999999999999,0.9149999999999999);
\fill[black!60!white] (6.615,0.885) rectangle (6.885,0.6149999999999999);
\fill[black!60!white] (6.915,0.885) rectangle (7.185,0.6149999999999999);
\fill[black!60!white] (7.215,0.885) rectangle (7.484999999999999,0.6149999999999999);
\fill[black!60!white] (4.815,0.585) rectangle (5.085,0.315);
\fill[black!60!white] (5.115,0.585) rectangle (5.385,0.315);
\fill[black!60!white] (6.315,0.585) rectangle (6.585,0.315);
\fill[black!60!white] (6.615,0.585) rectangle (6.885,0.315);
\fill[black!60!white] (7.215,0.585) rectangle (7.484999999999999,0.315);
\fill[black!60!white] (0.015,0.285) rectangle (0.285,0.015000000000000013);
\fill[black!60!white] (3.015,0.285) rectangle (3.2849999999999997,0.015000000000000013);
\fill[black!60!white] (3.915,0.285) rectangle (4.185,0.015000000000000013);
\fill[black!60!white] (4.215,0.285) rectangle (4.484999999999999,0.015000000000000013);
\fill[black!60!white] (4.815,0.285) rectangle (5.085,0.015000000000000013);
\fill[black!60!white] (7.215,0.285) rectangle (7.484999999999999,0.015000000000000013);
\node[] at (3.75,-0.7) {  $\text{per}\left(A_{25}'\right)=5200384$};\end{tikzpicture}
%5.2: 1111000000000000000000000.0011100000000000000000000.0011110000000000000000000.0110110000000000000000000.0100011000000000000000000.1100011000000000000000000.1000001110000000000000000.0000000111000000000000000.0000000111100000000000000.1000001101100000000000000.1000000000111000000000000.0000000000011100000000000.0000000000111100000000000.0000000000100111000000000.0000000000000011100000000.0000000000000011110000000.0000000000000000111100000.0000000000000000001110000.0000000000000000001111000.0000000000000000011011000.0000000000000000010001110.0000000000000000000000111.0000000000000000000000111.0000000000000000110001101.1000000000100110100000001.

% 5.1: 1110000000000000000000000011100000000000000000000001111000000000000000000001101100000000000000000000100011100000000000000000000000111100000000000000000000000111000000000000000000000001110000000000000000000001111000000000000000000001100110000000000000000001100011000000000000010001100000111000000000000000000000001111000000000000000000000001110000000000000000000000111100000000000000000000110110000000000000000000010001100000000000000000011000110000000000000000001000001110000000000000000000000011110000000000000000000000011100000000000000000000000111000000000000000000000111100000000000000000001110011000000000011000001100001

    \caption{The best construction found by general method PatternBoost (left) versus the best construction found by the best problem-specific standard algorithm we could create (right).}
    \label{fig:312}
\end{figure}
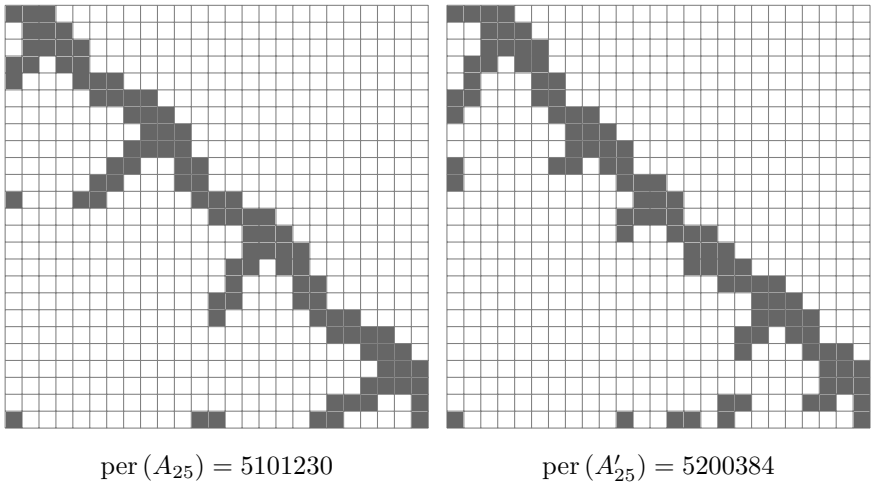

\subsubsection{Combining methods}

One obvious way to improve the performance of PatternBoost is to use a smarter local search algorithm than the simple one we used, thereby letting the transformer see ``more of the world''. By adding the 1000 best constructions found by our human method above to the training set of PatternBoost, the performance of PatternBoost immediately improves. Afterwards we have found 41 new constructions, unseen by either method before, that had permanents in the $5.1\cdot 10^6 - 5.2\cdot 10^6$ range, and so would have made it into the all time top 10 constructions seen with either method. Remarkably, none of these 41 new constructions beat the high score in Figure~\ref{fig:312}! While several of them had a permanent of around $5.19\cdot 10^6$, it appears that the human method got rather lucky by finding such a good construction while missing dozens of slightly suboptimal ones. But as is often the case in research, a bit of luck goes a long way.

 \subsection{Spanning subgraphs of the $d$-cube with diameter $d$}

There is no prevailing conjecture about the precise maximal number of edges in a graph on $n$ vertices with no $4$-cycle, or about the largest permanent of a $(312)$-avoiding binary matrix.  But there are other extremal problems where such conjectures have been formulated, and in such cases PatternBoost can be deployed to hunt for counterexamples.  In this section we describe a successful application of this strategy.

We will focus on an old problem that appeared in the works of Erd\H{o}s-Hamburger-Pippert-Weakley~\cite{erdos1996hypercube}, Graham--Harary~\cite{graham1992changing}, and Bouabdallah--Delorme--Djelloul~\cite{bouabdallah1995edge} who studied spanning subgraphs of the $d$-cube which have the same diameter $d$ as the cube itself. They asked the following natural question:

\begin{question}\label{que:erdos_hypercube_diam}
     What is the maximum number of edges one can delete from the $d$-dimensional hypercube, without increasing its diameter?
\end{question}

They observed that if we fix two opposite vertices $v$ and $v'$, and construct a subgraph $G$ by including, for every vertex $u\not\in\{v,v'\}$, an edge leading to a vertex that is closer to $v$ in the $d$-cube and an edge leading to a vertex that is closer to $v'$ in the $d$-cube, then the resulting subgraph is spanning and has diameter $d$. Such subgraphs have at least $2^d + \binom{d}{\lfloor d/2 \rfloor} - 2$ edges, and equality can be attained in many ways. See Figure~\ref{fig:spanning_hypercube_40} for an illustration of such a graph. They asked whether a better construction with fewer edges was possible, and Graham~\cite{graham1992changing} conjectured that this construction was in fact optimal. The best known lower bound is of the form $2^d + c\cdot\frac{2^d}{d}$~\cite{nenadov2019bounded}.

\begin{figure}[h!]
    \centering
    \includegraphics[width=0.6\linewidth]{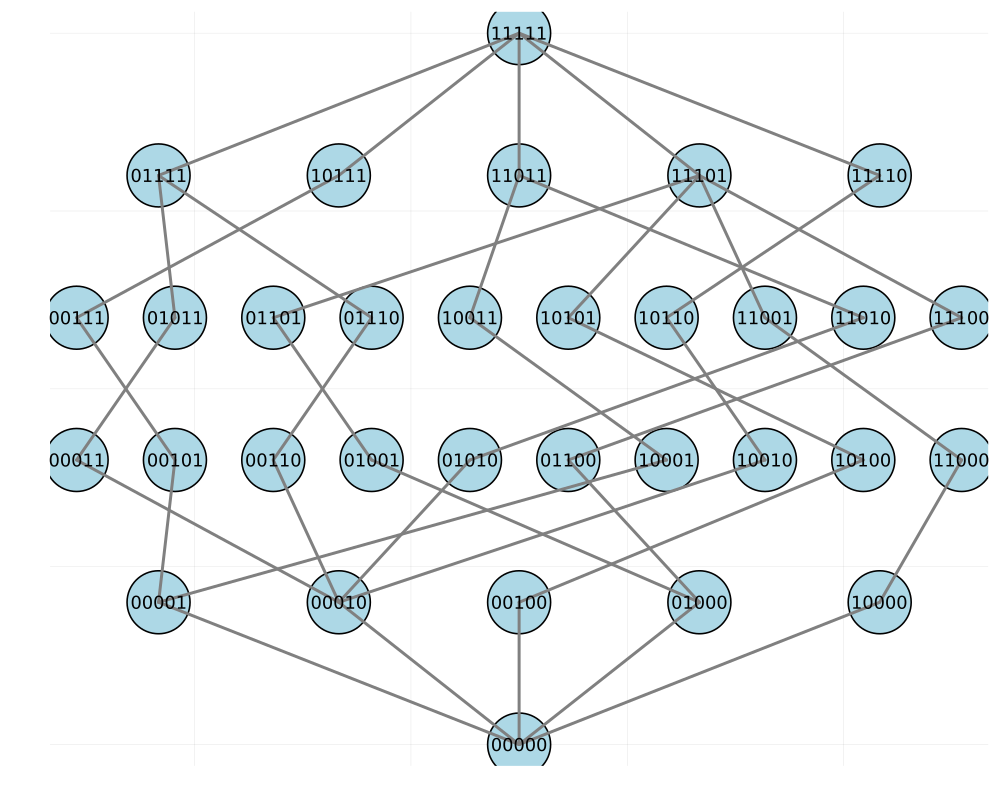}
    \caption{A subgraph of the 5-cube with diameter 5, with $2^4 + \binom{5}{2}-2 = 40$ edges. Note that from every vertex there is an edge going down and an edge going up, i.e.~there are no \emph{blocking} vertices.}
    \label{fig:spanning_hypercube_40}
\end{figure}

There is a natural way to set up this conjecture for PatternBoost. The score of a spanning, diameter $d$ subgraph can be the number of edges in it (which we try to minimize). For local search, the simplest algorithm one can do is, given a subgraph $G$, to add random edges to $G$ until it becomes spanning with diameter $d$, and then remove random edges for as long as possible while keeping the diameter at $d$. 

We ran this simple setup for $d=5$ and  $6$. For $d=5$ it seems that the above construction is optimal, but for $d=6$ we were able to find a graph with 81 edges (as opposed to the $2^6+\binom{6}{3}-2=82$ edges in the construction above), see Figure~\ref{fig:spanning_hypercube_81}. The complete list of edges can be found in Appendix~\ref{app:erdos_hypercube}. This disproves the above conjecture, and marks the first progress on this problem in 30 years. 

\begin{figure}[h!]
    \centering
    \includegraphics[width=\linewidth]{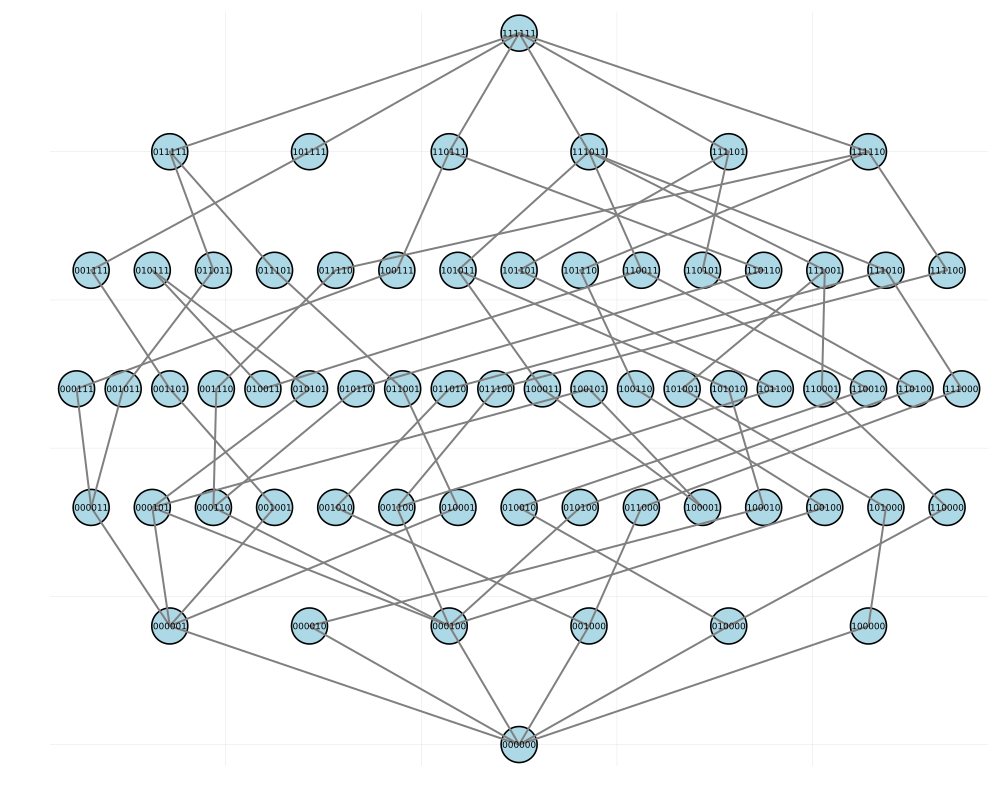}
    \caption{A spanning subgraph of the 6-cube with diameter 6 and 81 edges, which is a counterexample to the above conjecture. Note the presence of several \emph{blocking} vertices from which one can only go up or one can only go down.}
    \label{fig:spanning_hypercube_81}
\end{figure}

It would be interesting to see whether the lower or the upper bound is closer to the truth for large values of $d$.
 
\section{Other problems}\label{sec:other_problems}

\subsection{No isosceles triangles}\label{subsec:isosceles}

At most how many points can we choose in the $n\times n$ square grid, so that no three of them form the vertices of a (possibly flat) isosceles triangle? In other words, what is the value of 
$$f(n) := \max_{S\subset [n]^2}\{|S|: a,b,c\in S \text{ distinct} \implies d(a,b) \neq d(b,c)\},$$where $d(a,b)$ denotes Euclidean distance? This beautiful question was asked independently by Wu~\cite{wu2016counting}, Ellenberg--Jain~\cite{ellenberg2019convergence}, and possibly Erd\H{o}s~\cite{solymosierdos}. 

Determining the rate of growth of $f(n)$ seems like a hard problem. For upper bounds, the trivial $f(n)\leq n^2$ bound can be improved to $f(n)\leq e^{-c\log^{1/9}n}\cdot n^2$ by observing that a three term arithmetic progression in any horizontal line corresponds to an isosceles triangle, and using bounds on the sizes of such sets~\cite{bloom2023improvement}. Remarkably, even proving an upper bound of the form $f(n)\leq n^{1.99}$ is an open problem! 

A lower bound of $f(n)\geq cn$ can likely be deduced from results analyzing the random independent set process~\cite{bennett2016note}, but we could not find a short way to do so. A proof of a slightly weaker lower bound, obtained via the alteration method from probabilistic combinatorics, can be found in Appendix~\ref{app:iso_proof}.

Since we cannot determine the right order of magnitude of $f(n)$, it makes sense to approach the problem computationally for small values of $n$. What do the best constructions look like? For $n$ up to $\approx 32$, SAT solvers can find the best constructions and prove their optimality, see Figures~\ref{fig:isosceles_small1} and~\ref{fig:isosceles_small3}. Interestingly, for some values of $n$ there are multiple optimal constructions that look very different.

\begin{figure}[h]
  \begin{subfigure}{0.08\textwidth}
    \includegraphics[width=\linewidth]{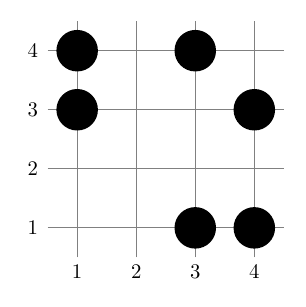}
    \caption{$f(4)=6$}
  \end{subfigure}\hfill
  \begin{subfigure}{0.10\textwidth}
    \includegraphics[width=\linewidth]{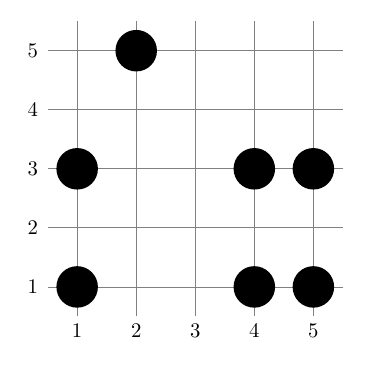}
    \caption{$f(5)=7$}
  \end{subfigure}
  \begin{subfigure}{0.12\textwidth}
    \includegraphics[width=\linewidth]{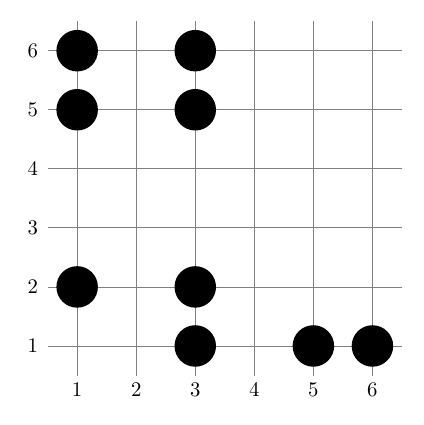}
    \caption{$f(6)=9$}
  \end{subfigure}\hfill
  \begin{subfigure}{0.14\textwidth}
    \includegraphics[width=\linewidth]{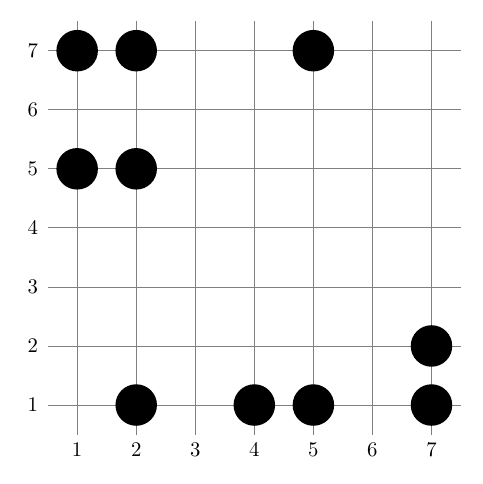}
    \caption{$f(7)=10$}
  \end{subfigure}
   \begin{subfigure}{0.16\textwidth}
    \includegraphics[width=\linewidth]{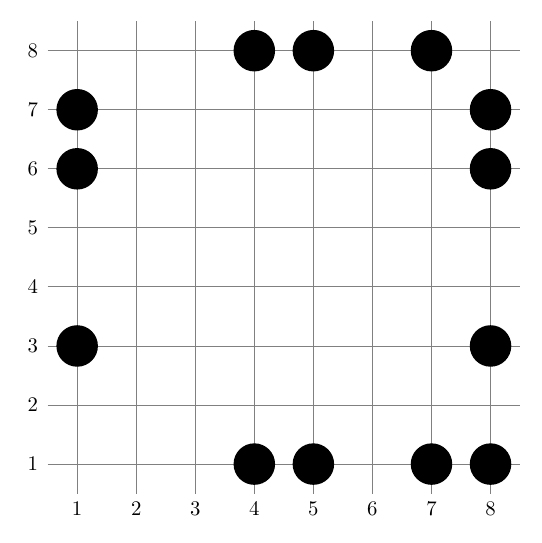}
    \caption{$f(8)=13$}
  \end{subfigure}\hfill
  \begin{subfigure}{0.18\textwidth}
    \includegraphics[width=\linewidth]{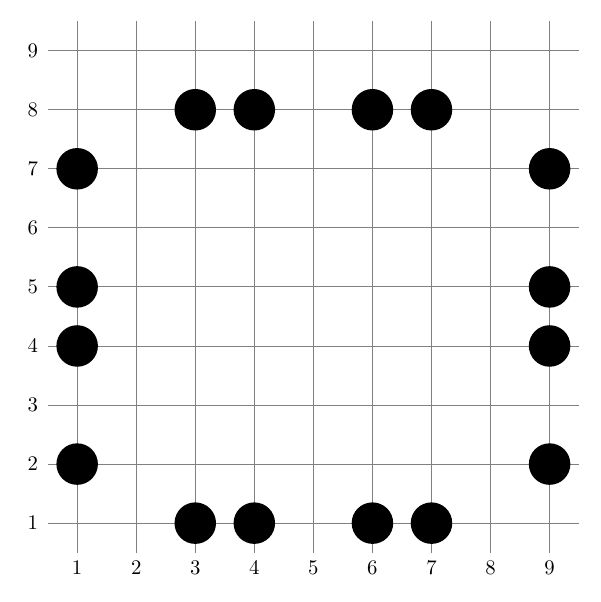}
    \caption{$f(9)=16$}
  \end{subfigure}
  \begin{subfigure}{0.20\textwidth}
    \includegraphics[width=\linewidth]{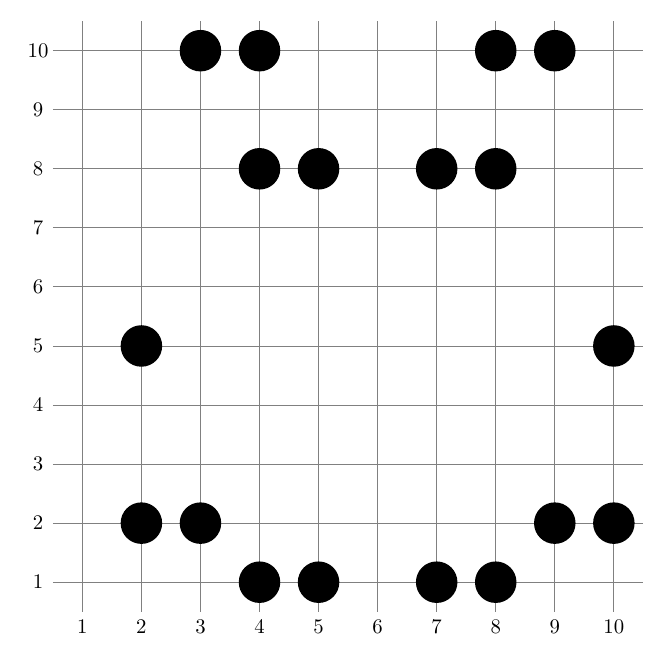}
    \caption{$f(10)=18$}
  \end{subfigure}\hfill
  \caption{The best constructions for $n=4$ to 10}
  \label{fig:isosceles_small1}
\end{figure}

\begin{figure}[h!]
  \begin{subfigure}{0.14\textwidth}
    \includegraphics[width=\linewidth]{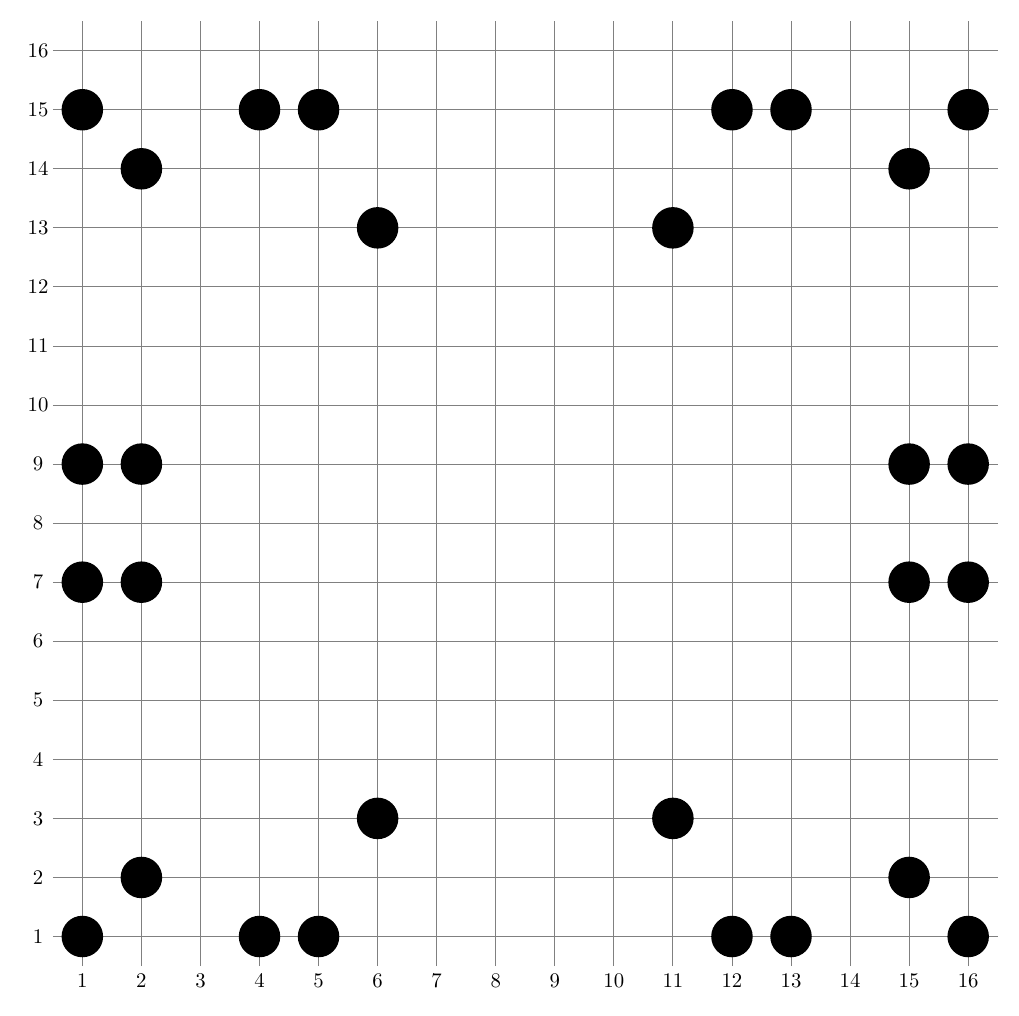}
    \caption{$f(16)= 28$}
  \end{subfigure}\hfill
  \begin{subfigure}{0.14\textwidth}
    \includegraphics[width=\linewidth]{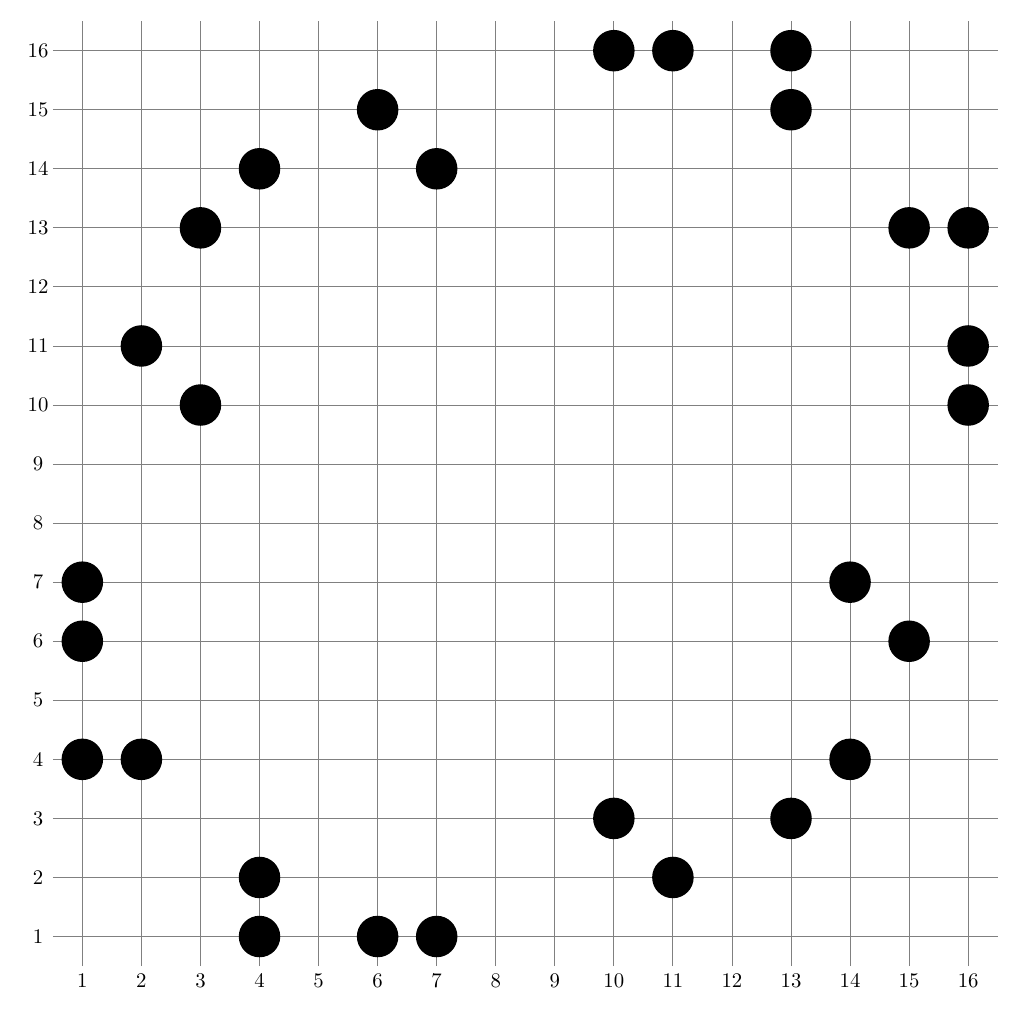}
    \caption{$f(16)= 28$}
  \end{subfigure}
  \begin{subfigure}{0.22\textwidth}
    \includegraphics[width=\linewidth]{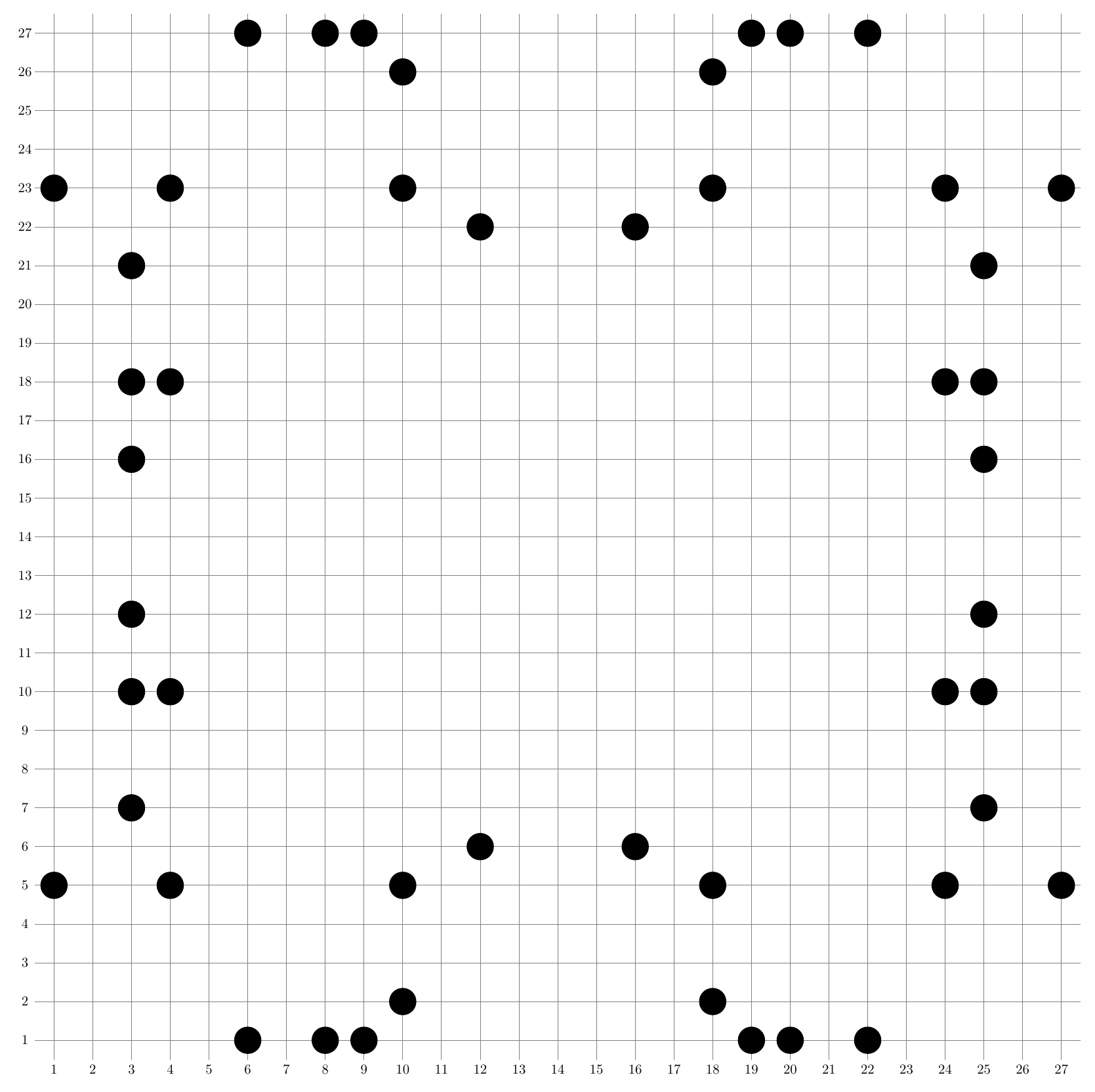}
    \caption{$f(27)= 48$}
  \end{subfigure}\hfill
  \begin{subfigure}{0.22\textwidth}
    \includegraphics[width=\linewidth]{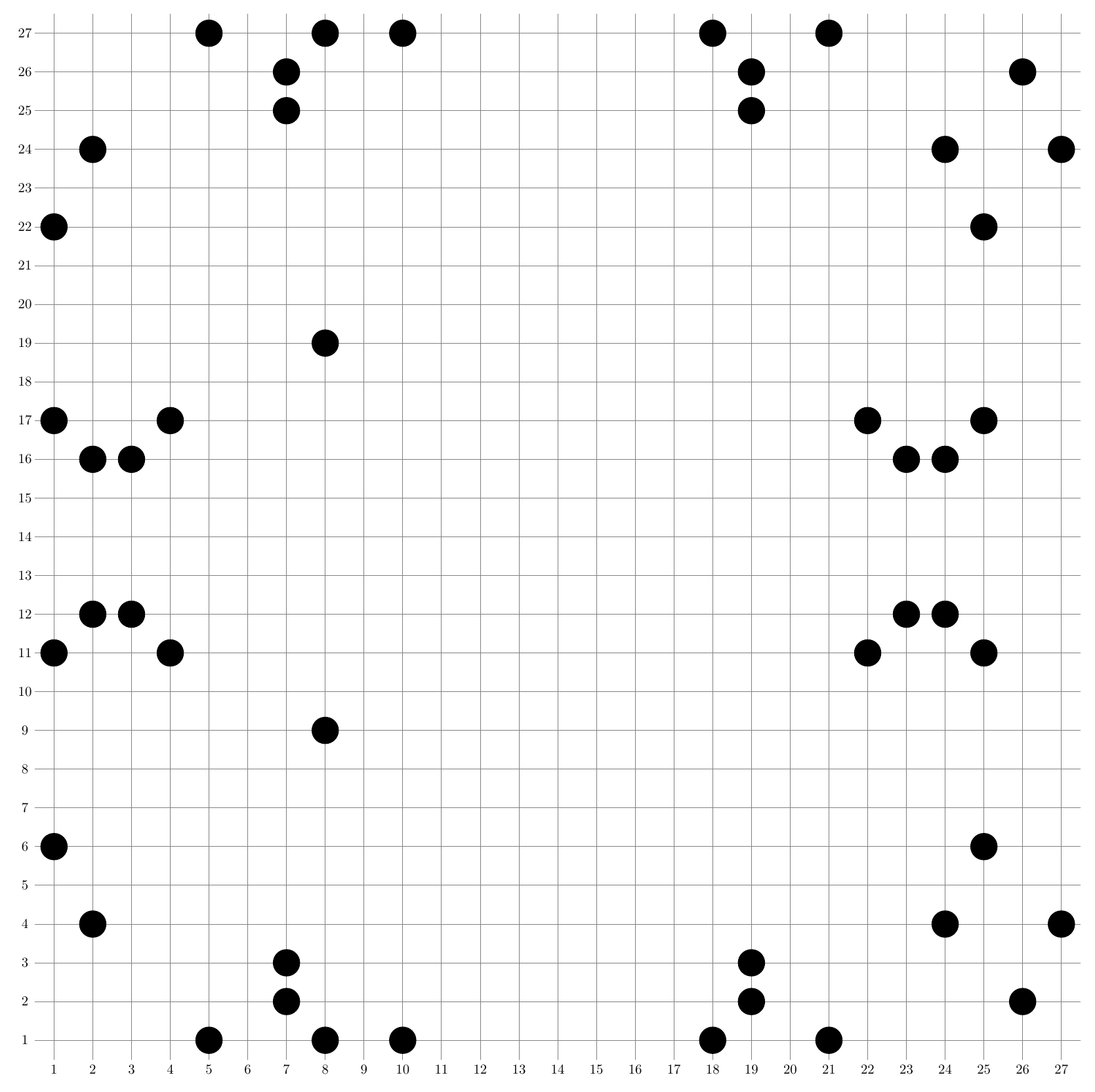}
    \caption{$f(27)= 48$}
  \end{subfigure}\hfill
  \begin{subfigure}{0.25\textwidth}
    \includegraphics[width=\linewidth]{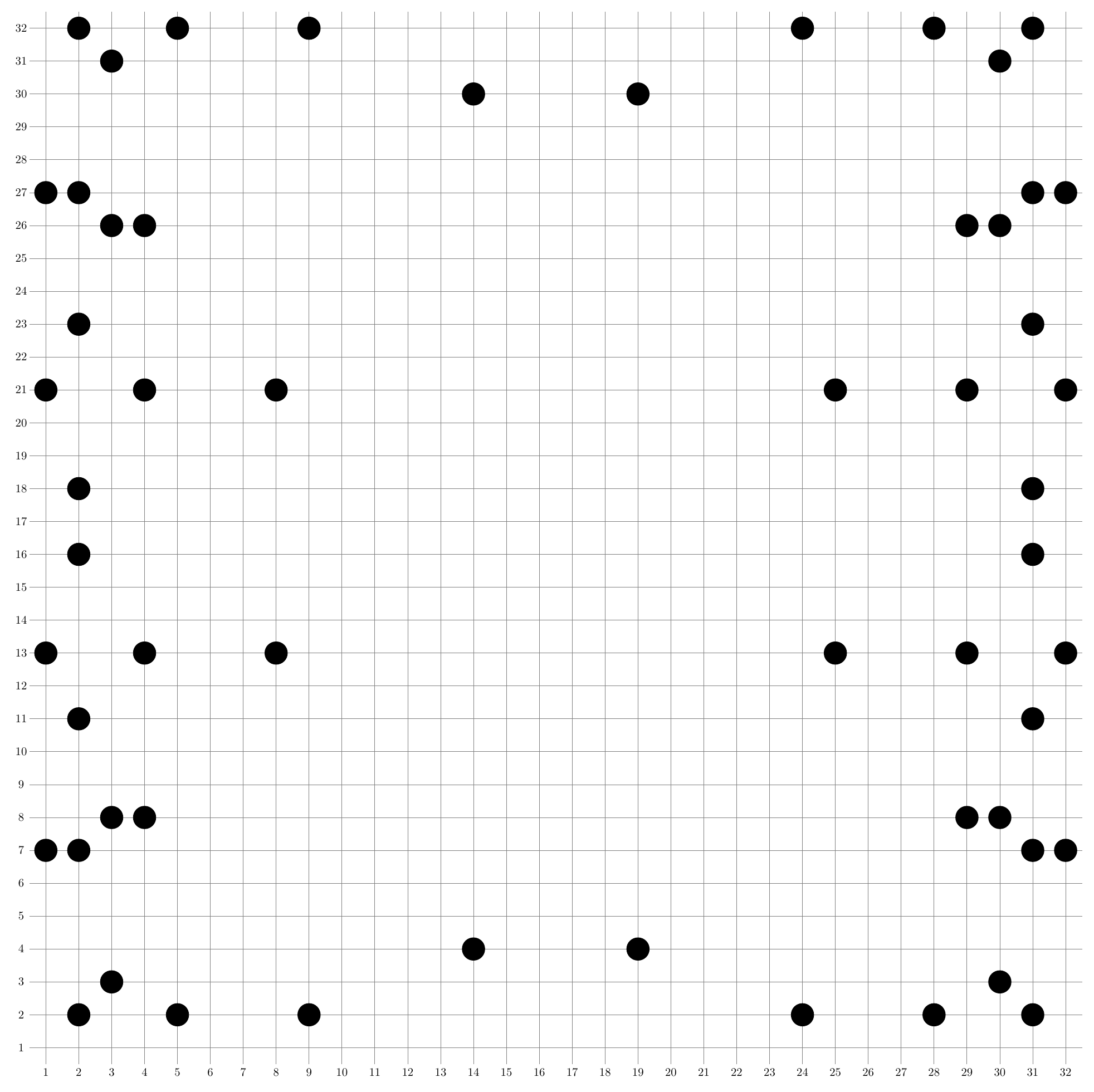}
    \caption{$f(32)= 56$}
    \end{subfigure}\hfill
  \caption{The best constructions for $n=16$, 27, and 32. For some $n$, there are many optimal solutions that look very different from each other.}
  \label{fig:isosceles_small3}

\end{figure}

When plotting the computed function values up to $n=32$, it appears that $f(n)$ is linear in $n$, and perhaps $f(n)$ will be approximately $\frac{16}{9}n$ for larger $n$ as well.
\begin{figure}[h!]
    \centering
    \includegraphics[width=0.35\linewidth]{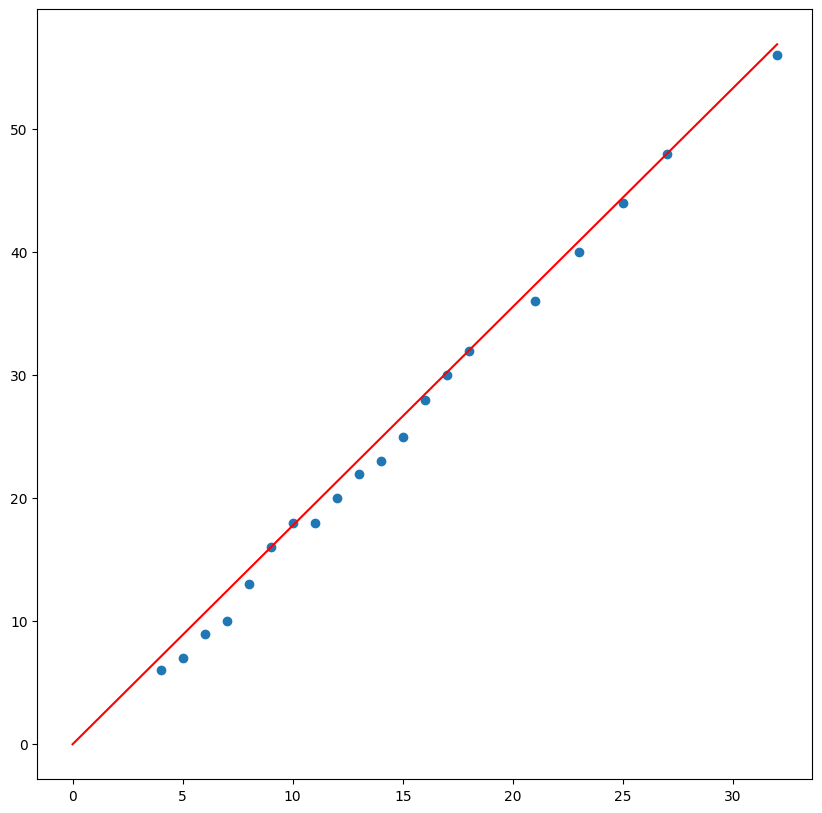}
    \caption{Plotting the computed values, $f$ seems to be linear}
    \label{fig:isosceles_plot}
\end{figure}
Based on these pictures of the optimal constructions for small~$n$, it was clear that the best solutions exhibit some kind of a pattern, but it is absolutely not clear to us what this pattern is and how we could describe or generalize it. These are the situations when PatternBoost can be useful -- when the pattern of the best constructions seems to be complicated for humans, there is a hope that machine learning methods can understand it a bit better than we can.

Let us now show how PatternBoost can be used to find good constructions for larger values of $n$, where standard methods don't work well anymore. For $n=64$, we first tried various problem-specific search methods running in total for about three months, to find the best construction we could. The result of this search was a solution with 108 points in $[64]^2$ without an isosceles triangle, see Figure~\ref{fig:108_110}. While doing our searches we always saved the best constructions we could find, and by the time we found a handful of different 108 constructions, we have built up a  database of about a million different maximal constructions with 104-108 points in them. Training a transformer on this dataset to find the common pattern in them, asking it to generate new constructions, and running a very simple local search on the resulting grids led to the discovery of a construction with 110 points within a few days. (Note that it wasn't too surprising that we skipped 109: most constructions we found had some symmetries in them, partially due to how our search algorithms worked.) This construction can be seen in Figure~\ref{fig:108_110}. By continuing this method we have found dozens more constructions with 110 points but never anything better -- even though the graph on Figure~\ref{fig:isosceles_plot} would suggest that 112 is also possible.

\begin{figure}[h!]
    \centering
    \includegraphics[width=0.45\linewidth]{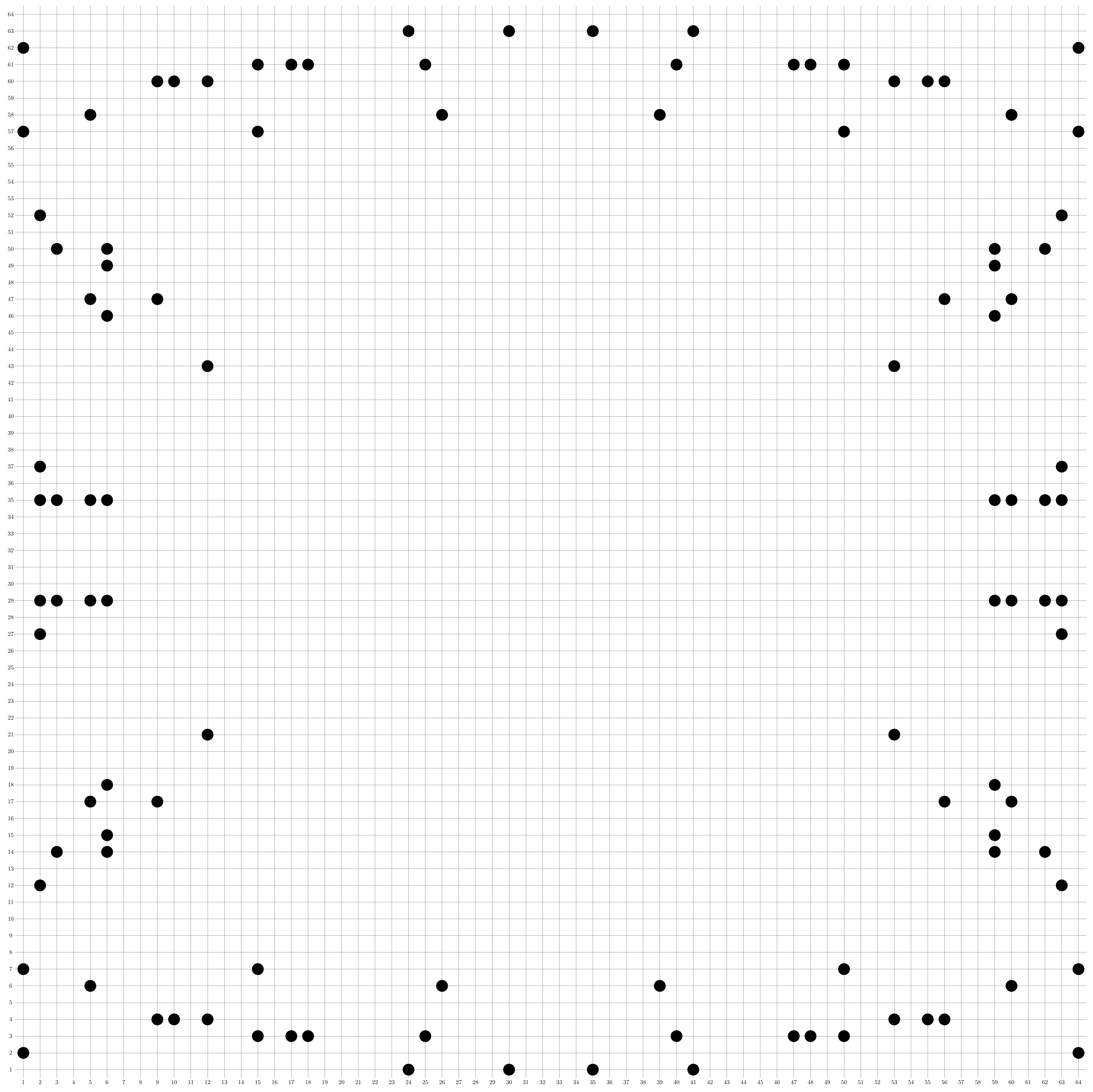}
    \includegraphics[width=0.45\linewidth]{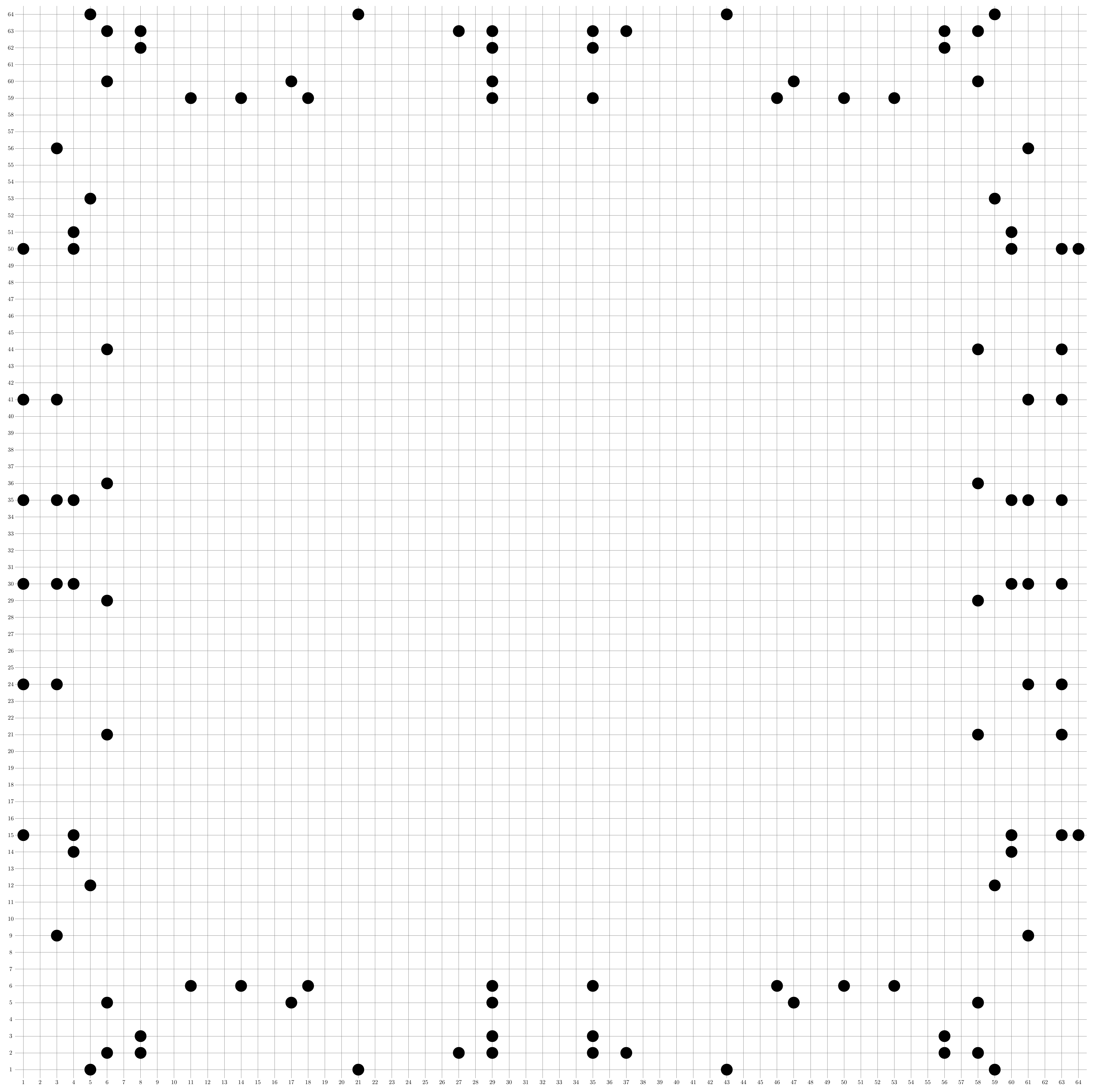}
    \caption{Left: a 108 point construction in $[64]^2$ without isosceles triangles, found by standard methods. Right: a 110 point construction found by adding only one transformer loop on top of the standard methods}
    \label{fig:108_110}
\end{figure}

Next, we repeated our searches for $n=100$. This size is much too large for us to even hope that we can find the optimal constructions, which likely have around 176 points in them. The best score our standard search techniques found was 154, and adding the transformer loop on top of this raised it to 160, see Figure~\ref{fig:isosceles_100}.

\begin{figure}[h!]
    \centering
    \includegraphics[width=0.45\linewidth]{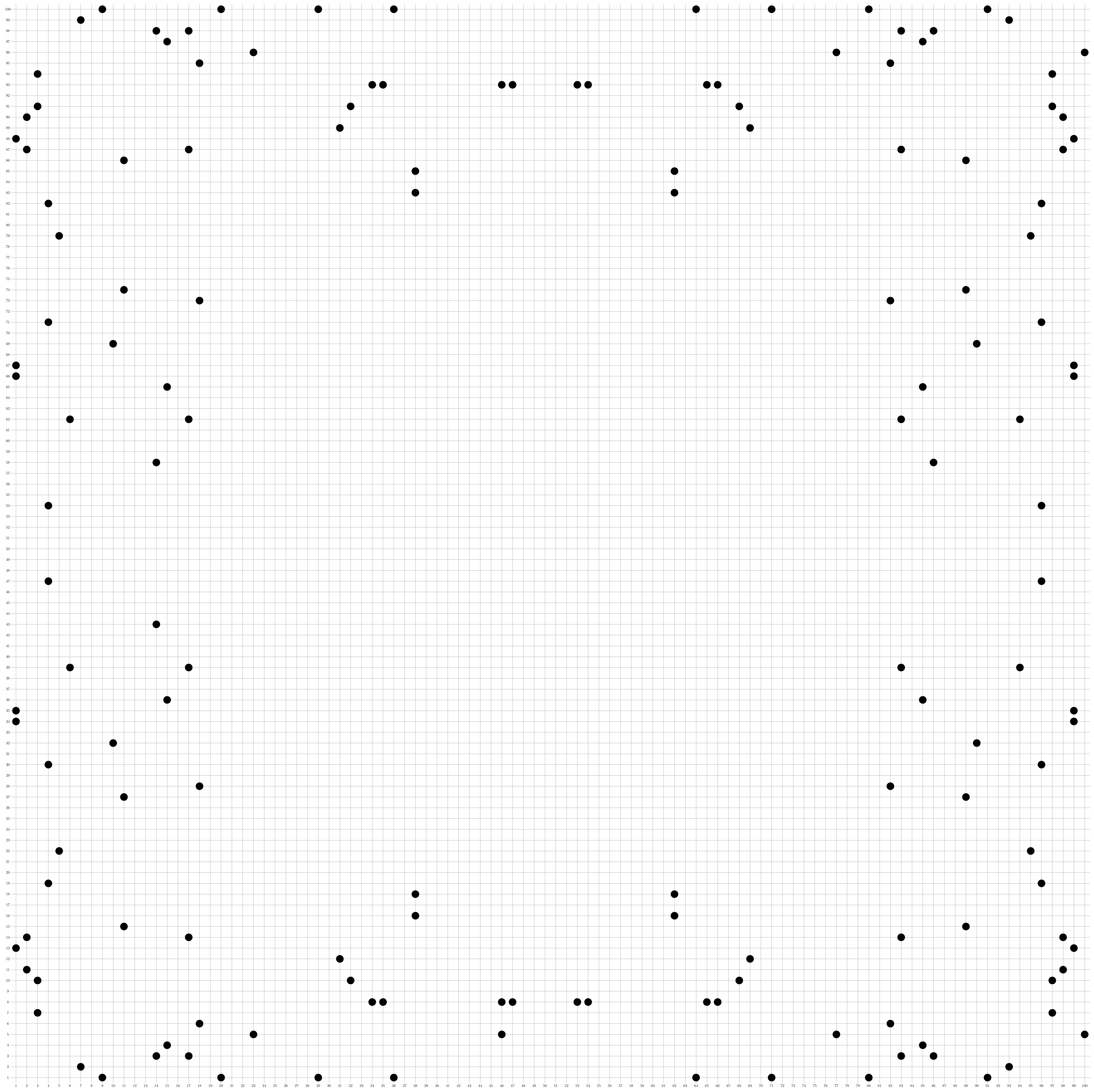}
    \includegraphics[width=0.45\linewidth]{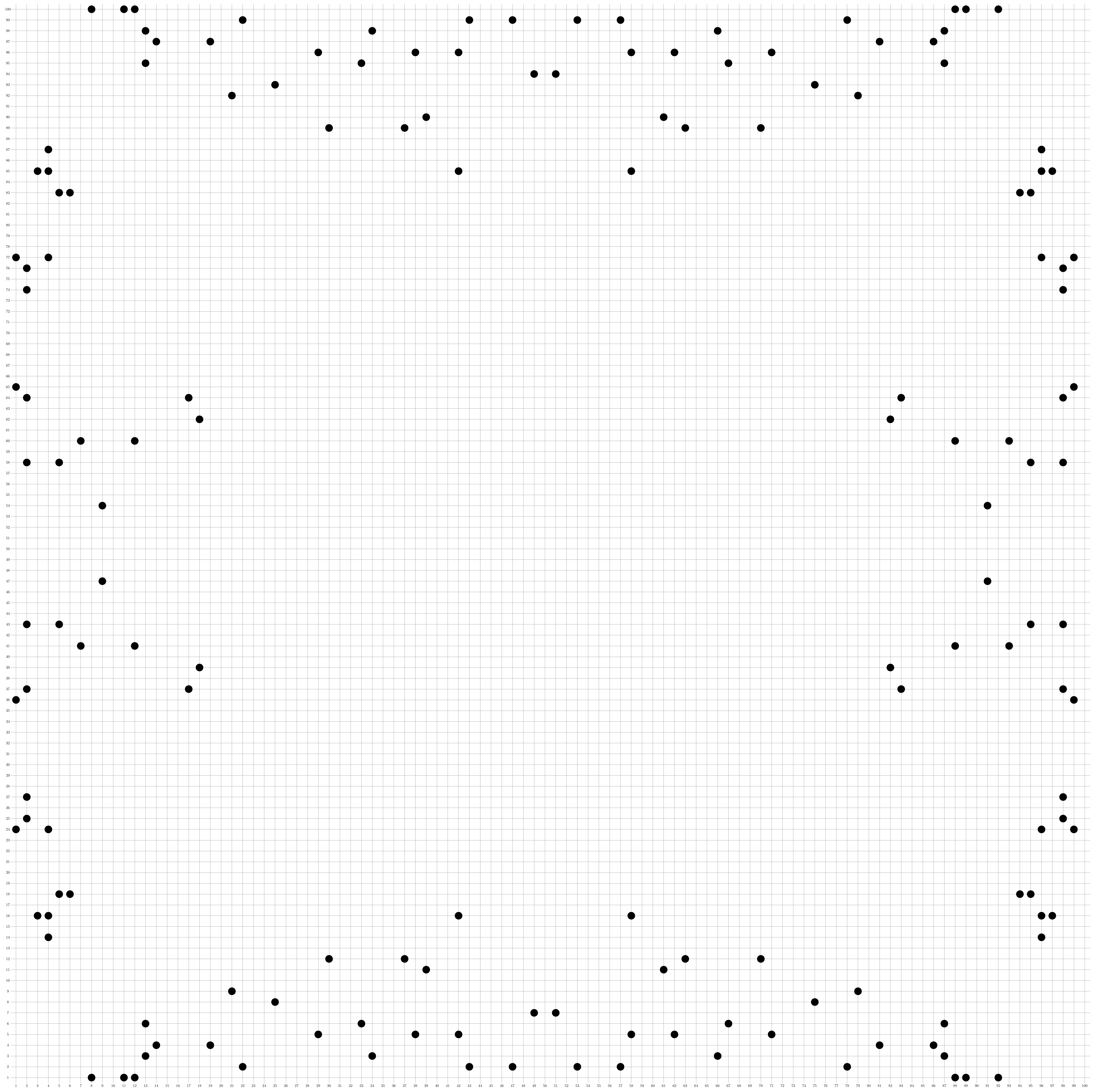}
    \caption{Left: a 154 point construction in $[100]^2$ found by standard methods. Right: a 160 point construction found by PatternBoost.}
    \label{fig:isosceles_100}
\end{figure}

\subsection{The ``no 5 on a sphere'' problem}

%\subsection{Subsets of $[n]^3$ with no 5 points on a sphere}

How many points can we choose from an $n \times n \times n$ grid, such that no 5 points lie on a sphere? Here we treat planes as degenerate spheres, so in particular 5 points on a plane are also not allowed. This is a another classic Erd\H{o}s problem. The answer is known~\cite{thiele1995geometric} to be between about $\frac{3}{16} \sqrt n$ and $4n$, so the right order of magnitude is not completely known. Can we repeat what we did in the problem of avoiding isosceles triangles in Section~\ref{subsec:isosceles}, and get good constructions for as many small values of $n$ as possible, and then guess whether the lower or the upper bound is more correct from the plot?

One of the reasons we were interested in this problem is that it is difficult to attack with traditional methods. Already, for small values of $n$ the number of constraints given by cospheral points is enormous. For example, for $n=4$ there are already over 700,000 5-tuples of cospheral points out of all $7.6$ million 5-tuples, and for $n=5$ the number of 5-tuples of points to check jumps to $230$ million! We tried with SAT solvers and linear programming methods, which work up to about $n=5$. However, even $n=6$ seems hopeless with these methods.

In contrast, this problem has an obvious setup for PatternBoost. We use a more naive local search than previously: given a string of proposed points $(x_1, x_2, \dots, x_m)$ with each $x_i \in [n]^3$ we try to add each point in succession, rejecting a new point if it is equal to a point which has already been added, or if it lies on a sphere containing four existing points. Once we have exhausted our list, we then try to add points randomly until addition of new points is impossible. Our PatternBoost loop is then the following:
\begin{enumerate}
\item Firstly, we generate $N$ configurations starting from the empty set (i.e. randomly adding points which don't lie on spheres through 4 existing points), and the top 10\% of constructions are kept.
\item These best constructions are tokenized, via $x_ i = (a_0, a_1, a_2) \mapsto a_0n^2 + a_1n + a_2$, so that each construction is represented\footnote{We use pythonic conventions: $[n] = \{ 0, 1, ..., n-1 \}$.} by a string of integers between $0$ and $n^3-1$. (Thus, in contrast to previous examples the tokenization is very simple.)
\item A transformer is trained on these tokenized constructions.
\item We then sample $N$ times from the transformer, and use these samples as seeds for local search as described above (i.e. we add try to add the points in specified order, and then resort to random search). We keep the top 10\% of the resulting constructions.
\item Steps 2-4 are repeated many (e.g. 10-50) times.
\end{enumerate}

Let us now point out a subtlety with this approach, which makes it initially difficult to carry out. Suppose our current construction consists of points $(x_0, \dots, x_{m-1})$ and we wish to decide whether a new point $x_m$ can be added. If we fix four points $x_i, x_j, x_k, x_l$, then our new point $x_m$ lies on the sphere that they span if and only if we have
\begin{equation} \label{eq:det}
\det \left (   \begin{matrix}% {c&c&c&c&c&c}
    x_{i,0} & x_{i,1} & x_{i,2} & x_{i,0}^2 + x_{i,1}^2 + x_{i,2}^2 & 1 \\
    x_{j,0} & x_{j,1} & x_{j,2} &x_{j,0}^2 + x_{j,1}^2 + x_{j,2}^2 & 1 \\
    x_{k,0} & x_{k,1} & x_{k,2} &x_{k,0}^2 + x_{k,1}^2 + x_{k,2}^2 & 1 \\
    x_{l,0} & x_{l,1} & x_{l,2} &x_{l,0}^2 + x_{l,1}^2 + x_{l,2}^2 & 1 \\
    x_{m,0} & x_{m,1} & x_{m,2} & x_{m,0}^2 + x_{m,1}^2 + x_{m,2}^2 & 1
    \end{matrix} \right ) = 0.
  \end{equation}
  Thus, in order to check whether we can add $x_m$ we need to evaluate $m \choose 4$ $5 \times 5$ determinants. Even in small examples (e.g. $n=6,7,8$) this is prohibitively slow, and does not produce enough examples in a reasonable amount of time. In order to get around this obstruction we apply two techniques:
  \begin{enumerate}
  \item Modern GPUs are optimized for many linear algebra computations in parallel. Thus, when set up correctly to run on GPU, we can evaluate the $m \choose 4$ determinants \eqref{eq:det} in a fraction of a second. (For example, generating $10K$ constructions for $n=4$ takes 500 seconds on a single CPU thread, and takes 23 seconds on GPU.)
  \item We exploit symmetry to augment our training data. Most constructions generated by local search will not have any symmetries, and hence we can turn 1 new construction into 48 new constructions ``for free'' by exploiting the symmetries of the cube.\footnote{One should be wary that data augmentation can lead to overfitting. This did not seem to be an issue in the examples we ran.}
  \end{enumerate}

Using these improvements, we ran PatternBoost for grid sizes $n$ up to $10$. Figure~\ref{fig:no5sphere} presents our best results, and Appendix \ref{app:sphere} includes a list of the best constructions we have found for $n=2,3,...,10$.% (see also Figure \ref{fig:no5sphere}).

  \begin{figure}[h!]
    \centering
    \includegraphics[width=0.6\linewidth]{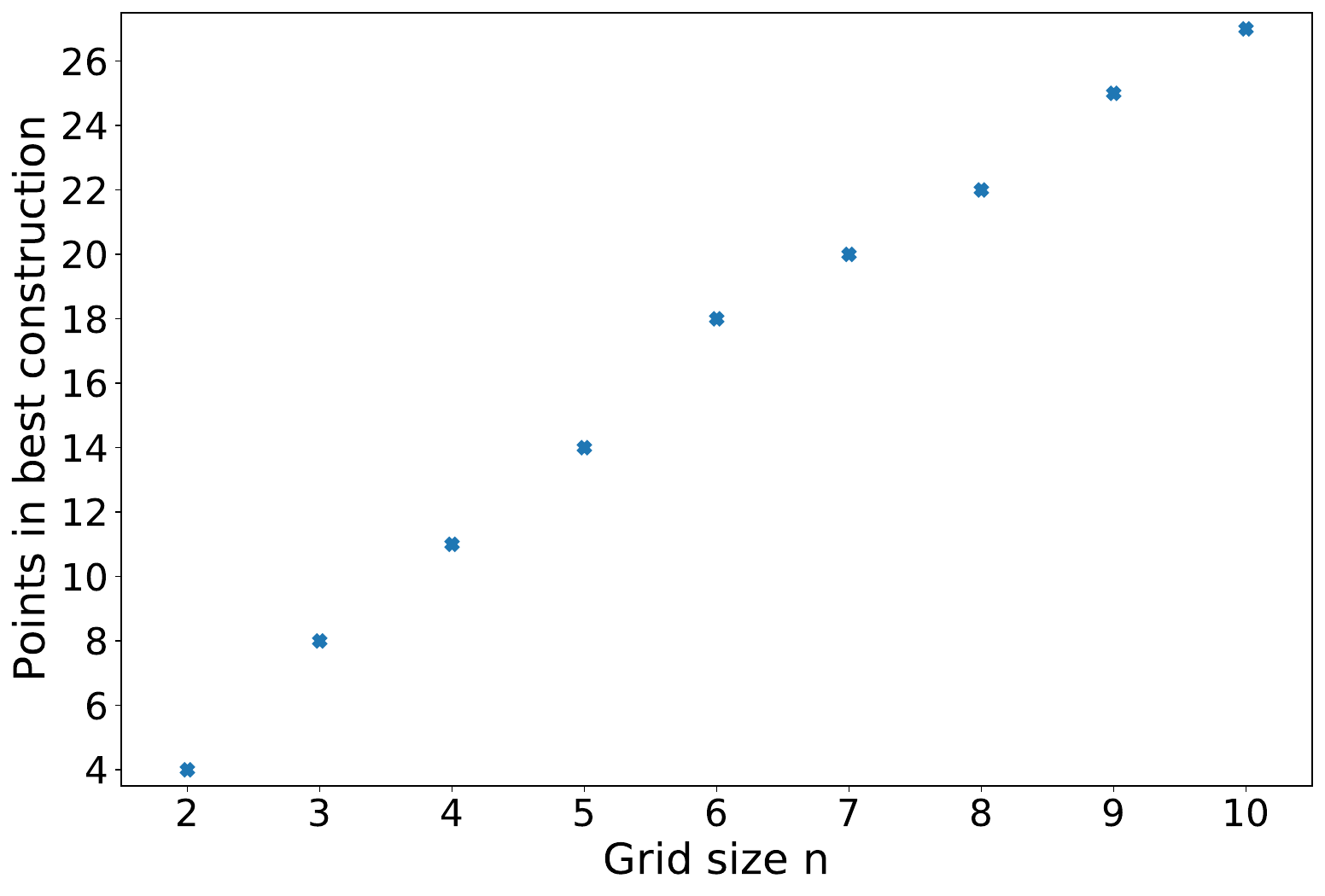}
        \caption{Best constructions for ``no 5 on a sphere'' for $n=2,3,\dots,10$, found by PatternBoost.}
    \label{fig:no5sphere}
\end{figure}

\subsection{Saturated Sperner systems of small size}
Imagine we are playing the following game: we are given an $n$ element set $\{1,2,\ldots,n\}$, and we keep picking subsets of it one after another. The only rule: we can never have chosen $k+1$ sets that are nested in each other (a $k+1$-chain). So for example when $k=2$, we cannot choose all three of the sets $\{1,2,5\}$, $\{1,2,3,5\}$, and $\{1,2,3,5,7,9\}$. We keep doing this until we cannot choose any more subsets without violating this condition. 
What is the earliest time we can get stuck?

\begin{figure}[h]
    \centering
    \begin{tikzpicture}[scale=0.7]

% Level 0
\node (empty) at (1.5,0) [circle, draw, fill=blue!30] {$\emptyset$};

% Level 1
\node (1) at (-1.5,2) [circle, draw, fill=blue!30] {1};
\node (2) at (0.5,2) [circle, draw, fill=blue!30] {2};
\node (3) at (2.5,2) [circle, draw] {3};
\node (4) at (4.5,2) [circle, draw] {4};

% Level 2
\node (12) at (-3.5,4) [circle, draw, fill=blue!30] {12};
\node (13) at (-1.5,4) [circle, draw] {13};
\node (14) at (0.5,4) [circle, draw] {14};
\node (23) at (2.5,4) [circle, draw] {23};
\node (24) at (4.5,4) [circle, draw] {24};
\node (34) at (6.5,4) [circle, draw, fill=blue!30] {34};

% Level 3
\node (123) at (-1.5,6) [circle, draw] {123};
\node (124) at (0.5,6) [circle, draw] {124};
\node (134) at (2.5,6) [circle, draw, fill=blue!30] {134};
\node (234) at (4.5,6) [circle, draw, fill=blue!30] {234};

% Level 4
\node (1234) at (1.5,8) [circle, draw, fill=blue!30] {1234};

% Edges from empty set to level 1
\draw (empty) -- (1);
\draw (empty) -- (2);
\draw (empty) -- (3);
\draw (empty) -- (4);

% Edges from level 1 to level 2
\draw (1) -- (12);
\draw (1) -- (13);
\draw (1) -- (14);

\draw (2) -- (12);
\draw (2) -- (23);
\draw (2) -- (24);

\draw (3) -- (13);
\draw (3) -- (23);
\draw (3) -- (34);

\draw (4) -- (14);
\draw (4) -- (24);
\draw (4) -- (34);

% Edges from level 2 to level 3
\draw (12) -- (123);
\draw (12) -- (124);

\draw (13) -- (123);
\draw (13) -- (134);

\draw (14) -- (124);
\draw (14) -- (134);

\draw (23) -- (123);
\draw (23) -- (234);

\draw (24) -- (124);
\draw (24) -- (234);

\draw (34) -- (134);
\draw (34) -- (234);

% Edges from level 3 to level 4
\draw (123) -- (1234);
\draw (124) -- (1234);
\draw (134) -- (1234);
\draw (234) -- (1234);

\end{tikzpicture}

    \caption{A saturated 4-Sperner family of size 8. The addition of any set creates a chain of length 5.}
    \label{fig:enter-label}
\end{figure}
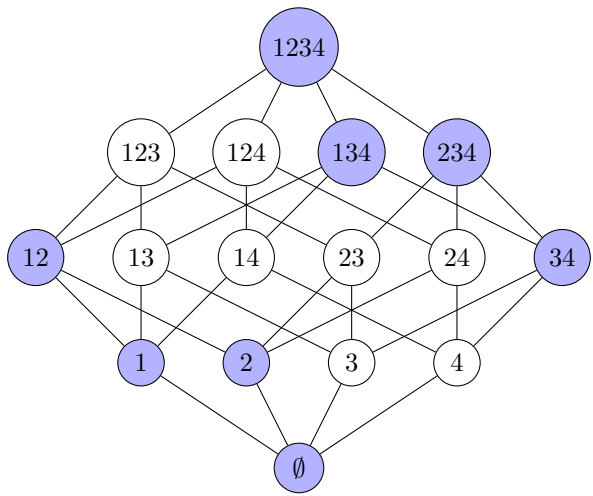

Formally, a family $\mathcal{F}$ of subsets of $\{1,2,\ldots,n\}$ is called \emph{$k$-Sperner} if it doesn't contain $k+1$ distinct sets $A_1,A_2,\ldots,A_{k+1}$ satisfying $A_1\subset A_2\subset \ldots \subset A_{k+1}$.  The family $\mathcal{F}$ is called \emph{saturated $k$-Sperner} if it is $k$-Sperner, and adding any set to $\mathcal{F}$ that is not already in $\mathcal{F}$ would destroy this property, i.e.~if the addition of any new set would create a chain of length $k+1$. With this terminology, the question is: what is the smallest possible size of a saturated $k$-Sperner family?

In~\cite{gerbner2013saturating}, Gerbner et al.~conjectured that if $n$ is sufficiently large compared to $k$, then the minimum size of a saturated $k$-Sperner system  is $2^{k-1}$. There is a simple construction achieving this, and they showed that this is optimal for $k=1,2,3$. Recently, Morrison, Noel, and Scott~\cite{morrison2014saturated} showed that their conjecture is true for all $k\leq 5$ and disproved this conjecture for larger $k$. They showed that there exists an $\epsilon > 0$ such that, for all $k$, and for all $n$ sufficiently large compared to $k$, there exists a saturated $k$-Sperner family of size $2^{(1-\epsilon)k}$. Their $\epsilon$ was approximately $0.023$, which they obtained by constructing a saturated 6-Sperner system of cardinality 30. This was recently improved to $\epsilon=0.0385$ by Martin--Veldt~\cite{martin2024saturation}, where they constructed a saturated 7-Sperner system of size 56. With PatternBoost we were able to construct a saturated 8-Sperner system of size 108, see Figure~\ref{fig:sper_12} and~Appendix~\ref{app:saturated_sperner}, which with the same methods as in~\cite{morrison2014saturated}, yields an improved bound $\epsilon = 0.04085$.

\begin{figure}[h!]
    \centering
    \includegraphics[width=\linewidth]{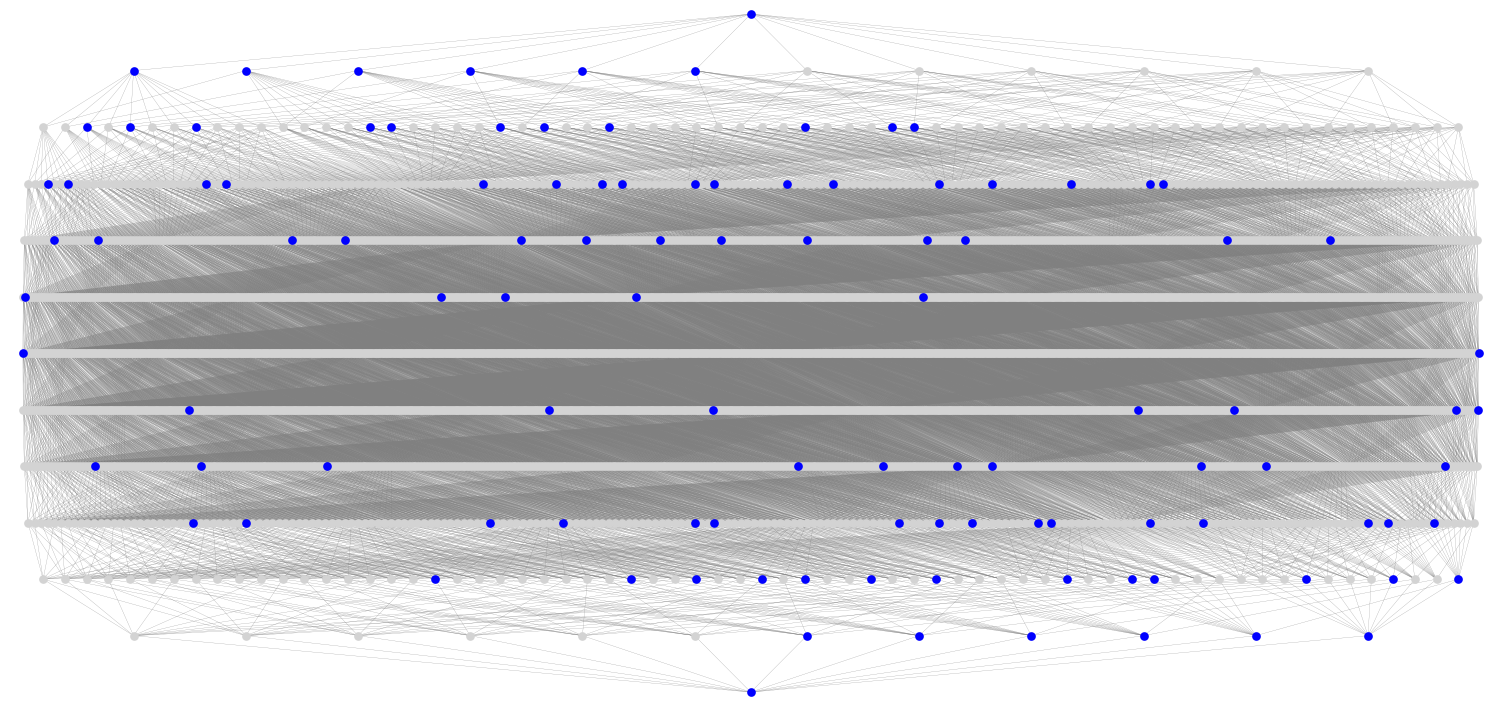}
    \caption{A saturated 8-Sperner family of size 108, with $n=12$. The addition of any set creates a chain of length 9. The exact sets can be found in~Appendix~\ref{app:saturated_sperner}.}
    \label{fig:sper_12}
\end{figure}

This problem does not have an obvious choice for a reward function. We used the structural characterization that was described in~\cite{morrison2014saturated} to come up with an ad hoc score function, and experimented with various parameters until it worked reasonably well. It is likely that through a better understanding of the problem, and the new insights in~\cite{martin2024saturation}, further improvements to this result are possible.

\subsection{Cross-Sperner systems}

Let $\mathcal{F}_1,\mathcal{F}_2,\ldots ,\mathcal{F}_k$ be $k$ families of subsets of $\{1,2,\ldots,n\}$. The tuple  $(\mathcal{F}_1,\mathcal{F}_2,\ldots ,\mathcal{F}_k)$ is called \emph{cross-Sperner} if there is no pair $i\neq j$ for which some $F_i \in \mathcal{F}_i$ is a subset of some $F_j \in \mathcal{F}_j$. How large can $\prod_{i=1}^k |\mathcal{F}_i|$ be, as a function of $n$ and $k$? Let us denote the answer by $\pi (n,k)$.

\begin{figure}[h]
    \centering
\begin{tikzpicture}[scale=0.525]

% Level 0
\node (empty) at (1.5,0) [circle, draw] {$\emptyset$};

% Level 1
\node (1) at (-1.5,2) [circle, draw, fill=blue!30] {1};
\node (2) at (0.5,2) [circle, draw] {2};
\node (3) at (2.5,2) [circle, draw] {3};
\node (4) at (4.5,2) [circle, draw] {4};

% Level 2
\node (12) at (-3.5,4) [circle, draw, fill=blue!30] {12};
\node (13) at (-1.5,4) [circle, draw, fill=blue!30] {13};
\node (14) at (0.5,4) [circle, draw] {14};
\node (23) at (2.5,4) [circle, draw] {23};
\node (24) at (4.5,4) [circle, draw] {24};
\node (34) at (6.5,4) [circle, draw] {34};

% Level 3
\node (123) at (-1.5,6) [circle, draw, fill=blue!30] {123};
\node (124) at (0.5,6) [circle, draw] {124};
\node (134) at (2.5,6) [circle, draw] {134};
\node (234) at (4.5,6) [circle, draw] {234};

% Level 4
\node (1234) at (1.5,8) [circle, draw] {1234};

% Edges from empty set to level 1
\draw (empty) -- (1);
\draw (empty) -- (2);
\draw (empty) -- (3);
\draw (empty) -- (4);

% Edges from level 1 to level 2
\draw (1) -- (12);
\draw (1) -- (13);
\draw (1) -- (14);

\draw (2) -- (12);
\draw (2) -- (23);
\draw (2) -- (24);

\draw (3) -- (13);
\draw (3) -- (23);
\draw (3) -- (34);

\draw (4) -- (14);
\draw (4) -- (24);
\draw (4) -- (34);

% Edges from level 2 to level 3
\draw (12) -- (123);
\draw (12) -- (124);

\draw (13) -- (123);
\draw (13) -- (134);

\draw (14) -- (124);
\draw (14) -- (134);

\draw (23) -- (123);
\draw (23) -- (234);

\draw (24) -- (124);
\draw (24) -- (234);

\draw (34) -- (134);
\draw (34) -- (234);

% Edges from level 3 to level 4
\draw (123) -- (1234);
\draw (124) -- (1234);
\draw (134) -- (1234);
\draw (234) -- (1234);

\end{tikzpicture}
\begin{tikzpicture}[scale=0.525]

% Level 0
\node (empty) at (1.5,0) [circle, draw] {$\emptyset$};

% Level 1
\node (1) at (-1.5,2) [circle, draw] {1};
\node (2) at (0.5,2) [circle, draw] {2};
\node (3) at (2.5,2) [circle, draw] {3};
\node (4) at (4.5,2) [circle, draw, fill=blue!30] {4};

% Level 2
\node (12) at (-3.5,4) [circle, draw] {12};
\node (13) at (-1.5,4) [circle, draw] {13};
\node (14) at (0.5,4) [circle, draw] {14};
\node (23) at (2.5,4) [circle, draw] {23};
\node (24) at (4.5,4) [circle, draw, fill=blue!30] {24};
\node (34) at (6.5,4) [circle, draw, fill=blue!30] {34};

% Level 3
\node (123) at (-1.5,6) [circle, draw] {123};
\node (124) at (0.5,6) [circle, draw] {124};
\node (134) at (2.5,6) [circle, draw] {134};
\node (234) at (4.5,6) [circle, draw, fill=blue!30] {234};

% Level 4
\node (1234) at (1.5,8) [circle, draw] {1234};

% Edges from empty set to level 1
\draw (empty) -- (1);
\draw (empty) -- (2);
\draw (empty) -- (3);
\draw (empty) -- (4);

% Edges from level 1 to level 2
\draw (1) -- (12);
\draw (1) -- (13);
\draw (1) -- (14);

\draw (2) -- (12);
\draw (2) -- (23);
\draw (2) -- (24);

\draw (3) -- (13);
\draw (3) -- (23);
\draw (3) -- (34);

\draw (4) -- (14);
\draw (4) -- (24);
\draw (4) -- (34);

% Edges from level 2 to level 3
\draw (12) -- (123);
\draw (12) -- (124);

\draw (13) -- (123);
\draw (13) -- (134);

\draw (14) -- (124);
\draw (14) -- (134);

\draw (23) -- (123);
\draw (23) -- (234);

\draw (24) -- (124);
\draw (24) -- (234);

\draw (34) -- (134);
\draw (34) -- (234);

% Edges from level 3 to level 4
\draw (123) -- (1234);
\draw (124) -- (1234);
\draw (134) -- (1234);
\draw (234) -- (1234);

\end{tikzpicture}
  \caption{A cross-Sperner family for the parameters $(n,k)=(4,2)$. We have $|\mathcal{F}_1|\cdot|\mathcal{F}_2|=4\cdot 4=16$, which is optimal. So $\pi(4,2)=16$.}
    \label{fig:enter-label}
\end{figure}
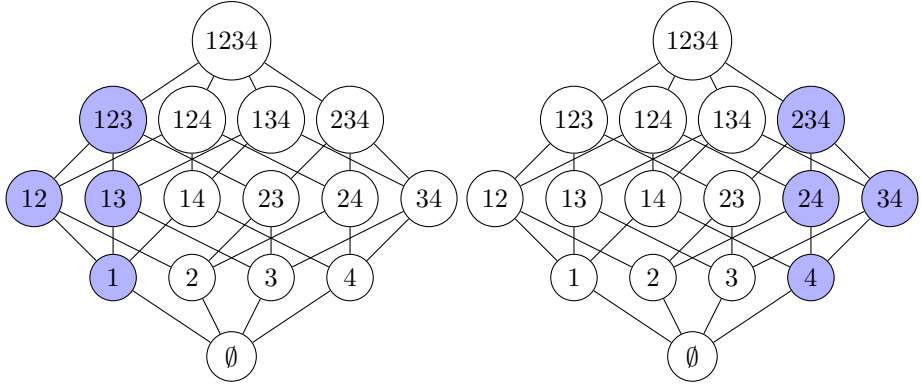

This problem was recently considered by Behague, Kuperus, Morrison, and Wright~\cite{behague2023improved}. They gave constructions that provide a lower bound on $\pi(n,k)$ for all pairs $(n,k)$, and they stated a general conjecture for an upper bound as well. If this conjecture is true, then it would be tight for $n$ large enough. 

We can set this problem up for PatternBoost in a simple way. Given a collection of families, we determine their score by deleting sets from them that would violate the cross-Sperner condition, and then calculating the products of the sizes of the smaller set families. With this setup, we were able to beat the constructions from~\cite{behague2023improved} for several small values of $(n,k)$, see Table~\ref{tab:cross_sperner} for the results and Appendix~\ref{app:cross_sperner} for the constructions. We did not manage to beat their conjectured upper bound for any values of the parameters. To us, it seems likely that the conjecture is true.

\begin{table}[h!]
\centering
\begin{tabular}{|c|c|m{3.3cm}|c|}
\hline
$(n,k)$ & BKMW construction &\centering BKMW conjectured \newline upper bound & Our construction \\
\hline
 $(6,3)$ & 810  &\centering 852  &  810 \\
\hline
 $(7,3)$&6075  &\centering 6818& \textbf{6480} \\
\hline
 $(8,3)$&50625  &\centering 54551 & \textbf{51840} \\
\hline
 $(9,3)$& 421875 &\centering 436413 &414720  \\
\hline $(5,4)$& 108 &\centering 129  & 108  \\
\hline $(6,4)$&  729&\centering 2075  & \textbf{1764} \\
\hline $(7,4)$& 19683  &\centering 33215  &  \textbf{28350}\\
\hline $(8,4)$& 531441  &\centering 531441  &531441  \\
\hline
\end{tabular}
\caption{Comparison of constructions and conjectured upper bounds for various $(n,k)$ values. The bold values denote an improvement over the previous best known constructions.}
\label{tab:cross_sperner}
\end{table}

\subsection{Double covers of high dimensional boxes}

The following elegant problem was first posed by Kearnes and Kiss~\cite{kearnes1999finite} during the open problem session at the August 1999 MIT meeting held in honor of Daniel Kleitman’s 65th birthday~\cite{saks2002kleitman}. A set of the form $A=A_1\times A_2\times\ldots\times A_d$, where $A_1, A_2, \ldots, A_d$ are finite sets with $|A_i| \geq 2$, is referred to here as a \emph{$d$-dimensional discrete box}. A set of the form $B = B_1 \times B_2 \times \ldots \times B_d$, where  $B_i \subseteq A_i$ for all $i \in [d]$, is a sub-box of $A$. A sub-box $B$ is considered proper if $B_i \neq A_i$ for every $i$. Kearnes and Kiss posed the following question: can the box $A = A_1 \times A_2 \times\ldots\times A_d$ be partitioned into fewer than $2^d$ proper sub-boxes?

Within a day, Alon, Bohman, Holzman and Kleitman solved~\cite{alon2002partitions} this problem, by proving that a $d$-dimensional discrete box cannot be partitioned into fewer than $2^d$ proper sub-boxes. Their eventual distillation of the proof is a real gem in mathematics. As is often the case, their theorem opened up further interesting questions, many of which are still unsolved.

Here we will attempt to make progress on the following problem, due to Leader--Mili\'cevi\'c--Tan~\cite{leader2018decomposing}.

\begin{question}\label{que:imrebox}
    Let $A$ be a $d$-dimensional discrete box, and suppose $B^{(1)},B^{(2)},\ldots,B^{(m)}$ are proper sub-boxes that cover every point in $A$ exactly twice. Must we have $m\geq 2^d$?
\end{question}

The earlier theorem tells us that to cover every point exactly once, we need at least $2^d$ sub-boxes. Intuitively, to cover every point twice means we will have to use more boxes. On the other hand, we now have more flexibility as our boxes need not be disjoint anymore.

Problems of this type seem to be a good fit for PatternBoost. There are many ways to pick a local search and a scoring function, and we have little idea what works best. We fixed the box $A$ as $\{0,1,2\}^d$ for some $d$, and used various simple ad hoc heuristics in the local search function. For score functions, we penalized covering a point more than twice more than covering a point less than twice, as any set of boxes that cover every point at most twice can be more easily extended to a correct solution by adding size 1 boxes to fill in the holes. Our results, which match the numbers we could get by applying the LP solver Gurobi~\cite{gurobi}, can be seen in Table~\ref{tab:boxes}.

\begin{table}[h!]
\centering
\begin{tabular}{|c|c|m{3.3cm}|c|}
\hline
$A$ & m  \\
\hline
 $\{0,1,2\}^2$ & 6   \\
 $\{0,1,2\}^3$ & 11   \\
 $\{0,1,2\}^4$ & 21   \\
 $\{0,1,2\}^5$ & 41   \\
 \hline
\end{tabular}
\caption{The smallest number $m$ of proper sub-boxes we could find to cover every point of the box $A=\{0,1,2\}^d$ exactly twice.}
\label{tab:boxes}
\end{table}

Our construction for $\{0,1,2\}^5$ can be found in Appendix~\ref{app:boxes}. For $\{0,1,2\}^6$ our result implies that there is a construction with $2\cdot 41$ sub-boxes, but neither Gurobi nor PatternBoost succeeded in coming up with a construction using fewer than $82$ boxes within a reasonable time. In this case we did not even get close with PatternBoost, it is likely that a smarter local search method is needed to make further progress. Hence the best result we were able to prove is the following:

\begin{theorem}
    Let $d\geq 5$. There is a collection of $\frac{41}{32}2^d\approx 1.28\cdot 2^d$ proper sub-boxes that cover every point of $\{0,1,2\}^d$ exactly twice.
\end{theorem}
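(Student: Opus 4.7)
The plan is to bootstrap from the $d=5$ construction found by PatternBoost (the 41-box double cover of $\{0,1,2\}^5$ recorded in Appendix~\ref{app:boxes}) to all higher dimensions by a product argument. Write $d = 5 + k$ with $k \geq 0$ and identify $\{0,1,2\}^d$ with $\{0,1,2\}^5 \times \{0,1,2\}^k$.

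First, I would take $\mathcal{C}$ to be the collection of $41$ proper sub-boxes of $\{0,1,2\}^5$ that covers every point exactly twice. Next, I would exhibit a partition $\mathcal{P}$ of $\{0,1,2\}^k$ into exactly $2^k$ proper sub-boxes: for each $s \in \{0,1\}^k$, let $P_s = \prod_{i=1}^k C_{s_i}$, where $C_0 = \{0\}$ and $C_1 = \{1,2\}$. Each $P_s$ is a proper sub-box of $\{0,1,2\}^k$ since neither $\{0\}$ nor $\{1,2\}$ equals $\{0,1,2\}$, and every $y \in \{0,1,2\}^k$ lies in $P_s$ for exactly one $s$ (take $s_i=0$ iff $y_i=0$), so $\mathcal{P}$ is indeed a partition of cardinality $2^k$.

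The claimed family is then the Cartesian product
\[
\mathcal{F} \;=\; \{\, B \times P : B \in \mathcal{C},\; P \in \mathcal{P}\,\},
\]
of cardinality $41 \cdot 2^k = \frac{41}{32} \cdot 2^d$. Each $B \times P \in \mathcal{F}$ is a proper sub-box of $\{0,1,2\}^d$ because properness is checked coordinate by coordinate: the first five factors are proper because $B$ is proper in $\{0,1,2\}^5$, and the last $k$ are proper by construction of $\mathcal{P}$. The multiplicity at a point $(x,y) \in \{0,1,2\}^5 \times \{0,1,2\}^k$ factors as $|\{B \in \mathcal{C} : x \in B\}| \cdot |\{P \in \mathcal{P} : y \in P\}| = 2 \cdot 1 = 2$, since $\mathcal{C}$ is a double cover and $\mathcal{P}$ is a partition. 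This gives the required bound for every $d \geq 5$.

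There is essentially no obstacle beyond having the base construction in hand; the substance of the theorem is entirely contained in the $d=5$ case produced by PatternBoost, and the extension to arbitrary $d \geq 5$ is a routine product construction. The one small point to be careful about is insisting that the $\{0,1,2\}^k$ partition uses sub-boxes that are proper in every coordinate (so that taking a product with a proper sub-box of $\{0,1,2\}^5$ still yields a proper sub-box of $\{0,1,2\}^d$), which is why we pair $\{0\}$ with $\{1,2\}$ rather than, say, using $\{0,1,2\}$ in any coordinate.
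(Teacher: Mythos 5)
Your proposal is correct and matches the paper's (largely implicit) argument: the paper's theorem rests on the 41-box double cover of $\{0,1,2\}^5$ from Appendix~\ref{app:boxes}, extended to higher dimensions by exactly the product construction you describe, pairing it with a partition of the remaining coordinates into proper sub-boxes (as the paper indicates when it notes the $2\cdot 41$ construction for $\{0,1,2\}^6$). Your write-up simply makes this routine extension explicit, including the correct care about properness in every coordinate.
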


It would be fascinating to find a way to explore even higher dimensions, to reduce this constant $1.28$ even more. Pushing this constant below 1 would resolve Question~\ref{que:imrebox}.

\section{Machine learning settings}

In this section, we present the main components of our machine learning architecture. We first describe the model, and the training and generation procedure, then we discuss the tokenization schemes used to represent mathematical constructs.

Makemore~\cite{makemore} is an implementation of GPT-2~\cite{radford2019language}, a decoder-only transformer model. It processes a sequence $t_1 t_2 t_3 \dots t_l$ of $l$ tokens (words) from a given vocabulary of size $v$ and outputs the conditional probabilities of the tokens in positions $2$ to $l+1$, given the corresponding prefix, i.e. $p(t_2|t_1), p(t_3|t_1t_2), \dots p(t_{l+1}|t_1t_2\dots t_l)$. In practice, we only care about $p(t_{l+1})$, the probabilities of the ``next token''.

In the model, the input tokens, encoded as one hot vectors (vector of $v$ integers, all zero except the position of $t_i$ in the vocabulary), are first transformed into vectors in $\mathbb  R^d$ by multiplying them by a learnable matrix of size $v\times d$, the Embedding.
In similar fashion, the position of each token, $i$ for token $t_i$, is also transformed into a vector in $\mathbb R^d$ by a Positional Embedding (a $v\times d$ matrix, also learnable). Token and Position embeddings are then added. At this point, the input is encoded as a sequence $w_1 \dots w_l$ of $l$ vectors in $\mathbb R^d$. This sequence is then processed by several \emph{transformer layers}, which transform it by adding correction factors, and performing normalizations.

In each layer, the sequence is first processed by a \emph{self-attention layer}, which computes the dot products between linear transformations of the input at different positions, i.e. products of the form $V((Kw_i) \cdot (Qw_j))$, with $V$, $K$ and $Q$ learnable matrices. The result of this calculation is added to the input. 
The resulting sequence is then processed by a \emph{fully-connected neural network}, with one hidden layer of $4d$ neurons, and GeLU activations~\cite{hendrycks2023gelu}. The result of this computation is added to the input sequence, which is then passed to the next transformer layer.

Finally, the output of the last transformer layer, $o_1 o_2\dots o_l$ ($o_i \in \mathbb R^d$) is decoded into next token probabilities, by multiplying it by a $d\times v$ matrix $D$, $z_i=Do_i$, for $i\leq l$, and applying to each $z_i$ a softmax operator ($\sigma(z_{i,j})=e^{z_{i,j}}/{\sum_{k=1}^v e^{z_{i,k}}}$), to transform $z_i$ into a vector of real numbers between $0$ and $1$, summing to $1$: the next token probabilities. In this paper, we use transformers with $2$ to $6$ layers, a dimension $d$ between $16$ and $128$, and $4$ attention heads.

During training, the model is presented with sequences of $l$ words representing mathematical constructs, prefixed by an initial token $t_0$, and computes the next token probabilities for all prefixes: from $t_0$ to the full sequence. We calculate the cross-entropy loss, $\sum_{i=0}^l {-\log(p(t_{i+1},i+1))}$, where $p(x,i)$ is the probability predicted by the model for the token $x$ in position $i$, and $t_{l+1}$ is a special end-of-sentence token. This rewards the model for assigning high probabilities to the actual tokens $t_i$, thus learning the specifics of the input sequence. The derivatives of the cross-entropy loss,  with respect to models parameters, are calculated using back-propagation, and accumulated. Every $32$ examples, model parameters are updated using the optimizer AdamW with a learning rate of $5 \cdot 10^{-4}$, and a weight decay of $0.01$.
The model is trained for a fixed number of optimization steps, chosen so that the model does not overfit its training data (i.e. the loss over the training sample remains close to the loss over an external test set).

Once the model is trained, we generate candidate solutions, starting from an ``empty sequence'' $t_0$, and sample a first token $t_1$ from the distribution predicted by the trained model. Then, we present the model with the sequence $t_0 t_1$, and sample the second token $t_2$ from the predicted distribution. We repeat this until a full solution is generated. This encourages the model to generate sequences similar to its training data (i.e. promising solutions to our problem).

To be processed by the transformer, mathematical constructs must be encoded as sequences of tokens using some fixed vocabulary, a process known as \emph{tokenization}. Grids and graphs can be represented as sequences of zeros and ones ($n^2$ binary entries for an $n \times n$ grid, $n(n-1)/2$ entries in an adjacency matrix for a graph with $n$ vertices). However, this naive representation, which uses a vocabulary of size $v=2$, results in very long sequences even for modest values of $n$. This complicates learning: the attention mechanism used by transformers scales, both in speed and memory usage, as the square of the length of the input, roughly $n^4$ and $n^4/4$ with this naive encoding. To improve learning efficiency, we encode several binary entries in a single token, trading a larger vocabulary for shorter sequences. By convention, sequences always begin and end with special ``\textless start\textgreater'' and ``\textless end\textgreater'' tokens.

For this purpose, we use fixed and variable length tokenizers. In the former, groups of $k$ binary entries are encoded as integers between $0$ and $2^k-1$ (e.g. binary matrices are encoded $8$ entries at a time, as integers between $0$ and $255$).
In the latter, special tokens are introduced, to represent commonly occurring binary sequences (e.g. long sequences of zeroes if we deal with sparse matrices). This results in even shorter sequences. In this paper, we use byte pair encoding (BPE), a common tokenization method for natural language. We start with a vocabulary of two ($0$ and $1$), and a set of representative model inputs $\mathcal S$, and iteratively introduce new tokens to represent the most common subsequences in $\mathcal S$. For instance, given $\mathcal S=\{100001,110001, 001001 \}$, we observe that the most common subsequence in $\mathcal S$ is $00$, and encode it as $2$, yielding $\mathcal S=\{1221,11201, 2121 \}$. The most common subsequence is now $12$, which we encode as $3$, yielding $\mathcal S=\{321,1301, 231 \}$. At this point, the average length of the sequences in $\mathcal S$ have been reduced from $6$ to $3.33$. The process is repeated until a fixed vocabulary size is reached.

\section{Concluding remarks}

In this paper, we introduced PatternBoost, a versatile machine learning algorithm designed to uncover ``good'' mathematical constructions. It seems to be perhaps most useful when the best constructions to the problem clearly follow some pattern, but these patterns are too complicated for us to fully understand. A key advantage of PatternBoost is its accessibility -- it can be used effectively by mathematicians without deep machine learning expertise, making it a practical tool in a wide range of mathematical applications. The method relies on two core components, which together determine how well it will perform. The first is a local search algorithm that takes an object as an input and improves it by making small local modifications. The second is a global learning algorithm, where a transformer model is trained on the best constructions to generate new starting points for the local search. We demonstrated that even with the simplest local search algorithm, PatternBoost can achieve state-of-the-art results in multiple cases. In practice however, one should always tailor the local search algorithm to the specific problem one wants to solve, as this can significantly enhance the performance of PatternBoost.

 Looking ahead, we believe that methods like PatternBoost have the potential to become a standard tool for researchers seeking to combine machine learning with mathematical discovery. PatternBoost's flexibility allows it to be adapted to a wide variety of problems, and its ease of use will hopefully lower the barrier for mathematicians to engage with machine learning techniques. As more problem-specific adaptations of the local search component are developed, we anticipate that PatternBoost will continue to push the boundaries of what can be achieved in automated mathematical construction. We are optimistic that this approach will not only help uncover new mathematical structures but also inspire further collaboration between the fields of mathematics and artificial intelligence.

\section*{Acknowledgements}

The authors would like to thank the Center Of Mathematical Sciences And Applications at Harvard University for running the inspiring program on ``Mathematics and Machine Learning'', where some of this research was done. GW would also like to thank Alex Davies, AZW would like to thank Gwena\"el Joret, and all authors  would like to thank Marijn Heule for valuable discussions.

\bibliographystyle{plain}
\bibliography{refs}

\appendix

\section{A proof of a lower bound on sets without isosceles triangles}\label{app:iso_proof}

Recall from Subsection~\ref{subsec:isosceles} the definition
$$f(n) := \max_{S\subset [n]^2}\{|S|: a,b,c\in S \text{ distinct} \implies d(a,b) \neq d(b,c)\},$$where $d(a,b)$ denotes Euclidean distance.

\begin{proposition}
    There exists an absolute constant $c>0$ such that $$f(n)\geq \frac{cn}{\sqrt{\log n}}.$$
\end{proposition}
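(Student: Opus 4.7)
The plan is to use the alteration method with a uniform random subset of $[n]^2$ and remove one point per isosceles triple. The argument has two quantitatively important ingredients: a random thinning at density $p$, and a bound on the total number of isosceles triples in $[n]^2$.

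\textbf{Step 1: counting isosceles triples in $[n]^2$.} For a point $b \in [n]^2$ and an integer $k \geq 1$, set $r(b,k) = \#\{a \in [n]^2 : d(a,b)^2 = k\}$. A triple $\{a,b,c\} \subset [n]^2$ of distinct points is isosceles (possibly flat) with apex $b$ iff $d(a,b) = d(b,c)$, so the total number of isosceles triples in $[n]^2$ is at most
\[
T(n) \;\leq\; \sum_{b \in [n]^2} \sum_{k \geq 1} \binom{r(b,k)}{2} \;\leq\; \frac{1}{2}\sum_{b \in [n]^2} \sum_{k \geq 1} r(b,k)^2.
\]
For each fixed $b$, $r(b,k) \leq r_2(k)$, the number of representations of $k$ as an ordered sum of two signed integer squares. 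The key number-theoretic input is the classical bound (essentially due to Ramanujan)
\[
\sum_{k \leq N} r_2(k)^2 \;=\; O(N \log N),
\]
applied with $N = 2n^2$. This gives $\sum_k r(b,k)^2 = O(n^2 \log n)$ uniformly in $b$, and hence $T(n) = O(n^4 \log n)$.

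\textbf{Step 2: random sparsification.} Include each point of $[n]^2$ independently with probability $p = \alpha / (n\sqrt{\log n})$, where $\alpha > 0$ is a small constant to be chosen. Let $S$ be the resulting random set and let $X$ be the number of isosceles triples contained in $S$. Then
\[
\mathbb{E}[|S|] \;=\; pn^2 \;=\; \frac{\alpha n}{\sqrt{\log n}}, \qquad
\mathbb{E}[X] \;\leq\; p^3 T(n) \;=\; O\!\left(\frac{\alpha^3 n}{\sqrt{\log n}}\right).
\]
Delete one point from each isosceles triple of $S$ to obtain $S'$, which is isosceles-triangle-free. Then
\[
\mathbb{E}[|S'|] \;\geq\; \mathbb{E}[|S|] - \mathbb{E}[X] \;\geq\; (\alpha - C\alpha^3)\cdot \frac{n}{\sqrt{\log n}}
\]
for a constant $C$ coming from Step 1. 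Choosing $\alpha$ small enough makes the coefficient a positive constant $c$, so some realization gives $|S'| \geq c n/\sqrt{\log n}$, proving $f(n) \geq cn/\sqrt{\log n}$.

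\textbf{Main obstacle.} The probabilistic part (Step 2) is entirely routine once Step 1 is in place; the real content is the bound $\sum_{k \leq N} r_2(k)^2 = O(N\log N)$. This is precisely the right magnitude to produce the $\sqrt{\log n}$ denominator: any crude pointwise estimate such as $r_2(k) = k^{O(1/\log\log k)}$ would only yield $n^{1-o(1)}$. The virtue of the sum-of-squares estimate is that it captures the fact that although some radii $\sqrt{k}$ support many lattice points, these exceptional radii are rare on average, which is exactly what the alteration argument needs. I would either quote this bound from a standard reference or, if a self-contained proof is desired, derive it from $r_2(k) \leq 4\,d(k)$ combined with a sharper input exploiting that $r_2$ is a convolution $r_2 = 4 (\mathbf{1}*\chi_{-4})$, whose Dirichlet series $4\zeta(s)L(s,\chi_{-4})$ has a simple pole at $s=1$ (so its square has a double pole, giving the $N\log N$ term by a Perron-type argument).
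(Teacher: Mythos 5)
Your proposal is correct and follows essentially the same route as the paper: a random subset of density $p \asymp 1/(n\sqrt{\log n})$ combined with the alteration method, where the key quantitative input in both cases is the second-moment bound $\sum_{k\le N} r_2(k)^2 = O(N\log N)$ for the sum-of-two-squares representation function. The only cosmetic difference is that the paper deletes every ``peak'' vertex of an isosceles triangle while you delete one point per isosceles triple; the resulting computation and choice of $p$ are the same.
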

\begin{proof}
    Let $S$ be a random subset of the $n \times n$ grid $[n]^2$, obtained by independently choosing each point to be in $S$ with probability $p$ to be determined later. Let $v=(v_x,v_y)$ be an arbitrary point in $[n]^2$, and denote by $d_k(v)$ the set of points $u=(u_x,u_y)\in [n]^2$ for which $d(u,v)^2=(u_x-v_x)^2 + (u_y-v_y)^2 = k$. 

    What is the probability that $v$ is in $S$, and it is the ``peak'' of an isosceles triangle in $S$, i.e.~that there exist two other points $u,w$ with $d(v,u)=d(v,w)$? This would mean that for some integer $k$, we have $|S\cap d_k(v)|\geq 2$. This can be bounded by repeatedly using the union bound, as
    \begin{equation*}
        \mathbb{P}(\exists k: |S\cap d_k(v)|\geq 2)\leq \sum_{k=1}^{2n^2}\mathbb{P}(|S\cap d_k(v)|\geq 2)\leq p^2\sum_{k=1}^{2n^2} |d_k(v)|^2\leq Cp^2n^2\log n.
    \end{equation*}
    Here the last inequality, where $C>0$ is an absolute constant, follows from the asymptotics of the second moment of the sum of squares function~\cite{numbertheorybound,blomer2006estimates}. This means that the probability that a particular vertex $v$ is the peak of an isosceles triangle in $S$ is
    \begin{equation*}
    \begin{split}
        \mathbb{P}(v\text{ is the peak of an isosceles triangle in }S)&= 
        \mathbb{P}(v\in S)\mathbb{P}(\exists k: |S\cap d_k(v)|\geq 2)\\ &\leq Cp^3n^2\log n.
        \end{split}
    \end{equation*}
    Therefore, the expected number of points in $[n]^2$ that are peaks of isosceles triangles in $S$ is at most $Cp^3n^4\log n$. Let $P_S\subset S$ denote these vertices. Note that the set $S\setminus P_S$ contains no isosceles triangles, and has expected size
    $$\mathbb{E}(|S\setminus P_S|)=\mathbb{E}(|S|)-\mathbb{E}(|P_S|)\geq pn^2-Cp^3n^4\log n.$$ Setting $p=\frac{\epsilon}{n\sqrt{\log n}}$, where $\epsilon>0$ is a sufficiently small constant, yields
    $$\mathbb{E}(|S\setminus P_S|)\geq \frac{n}{\sqrt{\log n}}\left(\epsilon - C\epsilon^3\right)=: \epsilon' \frac{n}{\sqrt{\log n}}.$$
    Therefore, there exists a choice of $S$ for which $|S\setminus P_S|\geq \epsilon'\frac{n}{\sqrt{\log n}}$, finishing the proof.
\end{proof}

\section{Subsets of $[n]^3$ with no 5 points on a sphere}\label{app:sphere}

$n=3,$ 8 points:
(1, 1, 3), (1, 2, 1), (1, 3, 3), (2, 2, 2), (2, 2, 3), (2, 3, 1), (3, 1, 1), (3, 3, 2)

~

\noindent$n=4,$ 11 points: (1, 1, 3), (1, 3, 2), (1, 4, 1), (2, 1, 1), (2, 1, 3), (2, 2, 3), (2, 3, 4), (3, 4, 2), (3, 4, 4), (4, 2, 4), (4, 4, 1)

~

\noindent
$n=5,$ 14 points: (1, 1, 3), (1, 2, 1), (1, 4, 5), (1, 5, 3), (2, 2, 5), (2, 3, 2), (2, 5, 2), (3, 2, 4), (4, 3, 1), (4, 4, 4), (4, 5, 5), (5, 1, 1), (5, 2, 4), (5, 5, 4)

~

\noindent
%$n=6$, 17 points: (1, 2, 1), (1, 3, 6), (1, 5, 6), (2, 1, 6), (2, 3, 3), (2, 6, 2), (3, 1, 1), (3, 2, 5), (3, 6, 2), (4, 1, 2), (4, 3, 2), (4, 5, 5), (4, 6, 5), (5, 3, 1), (6, 1, 5), (6, 4, 6), (6, 6, 1)
$n=6$, 18 points: (1, 1, 2), (1, 4, 1), (1, 5, 3), (1, 5, 5), (2, 1, 5), (2, 2, 3), (2, 4, 6), (2, 6, 6), (3, 5, 2), (3, 6, 1), (4, 1, 6), (4, 3, 4), (4, 6, 5), (5, 2, 2), (6, 1, 1), (6, 2, 1), (6, 3, 5), (6, 5, 6)

~

\noindent
$n=7,$ 20 points: (1, 1, 1), (1, 3, 3), (1, 6, 7), (1, 7, 6), (2, 1, 5), (2, 2, 4), (2, 6, 1), (3, 6, 4), (4, 2, 1), (4, 7, 2), (5, 1, 6), (5, 4, 4), (5, 5, 3), (6, 1, 7), (6, 3, 6), (6, 5, 7), (7, 2, 1), (7, 5, 2), (7, 6, 2), (7, 7, 7)

~

\noindent
$n=8,$ 22 points: (1, 1, 2), (1, 2, 5), (1, 6, 8), (1, 7, 1),
 (2, 7, 2), (2, 8, 6), (3, 1, 5), (3, 3, 8),
 (3, 7, 6), (4, 1, 8), (4, 3, 1), (5, 3, 3),
 (5, 4, 4), (5, 5, 7), (6, 7, 4), (6, 8, 1),
 (6, 8, 8), (7, 2, 1), (7, 5, 6), (8, 1, 3),
 (8, 6, 7), (8, 8, 5)

~

\noindent
$n=9,$ 25 points: (1, 1, 9), (1, 2, 1), (1, 4, 1), (1, 9, 4), (2, 1, 6), (2, 4, 5), (2, 6, 8), (2, 9, 9), (3, 3, 9), (3, 8, 3), (4, 2, 3), (4, 5, 2), (4, 9, 8), (5, 2, 2), (5, 8, 2), (6, 1, 3), (6, 4, 8), (6, 5, 5), (7, 7, 6), (8, 1, 1), (8, 2, 9), (8, 3, 6), (9, 7, 1), (9, 8, 7), (9, 9, 4)

~

\noindent
$n=10,$ 27 points: (1, 1, 7), (1, 2, 3), (1, 9, 9), (2, 2, 5), (2, 3, 9), (2, 4, 1), (3, 1, 2), (3, 5, 2), (3, 5, 4), (3, 9, 5), (4, 3, 5), (4, 6, 3), (5, 1, 3), (5, 7, 1), (7, 7, 3), (8, 2, 7), (8, 7, 1), (9, 5, 5), (10, 1, 1), (10, 3, 9), (11, 1, 4), (11, 2, 2), (11, 7, 6), (12, 3, 7), (12, 9, 1), (13, 1, 8), (13, 3, 3)

\section{A saturated 8-Sperner system of size 108}\label{app:saturated_sperner}

The set family is the union of all sets below. We partitioned it according to the structural characterization of~\cite{morrison2014saturated} for clarity.

~

{\footnotesize
$\mathcal{A}_0 = \{\emptyset\}$}

~

{\footnotesize
$\mathcal{A}_1$=$\{\{1\}, \{2\}, \{3\}, \{4\}, \{5\}, \{6\}, \{7, 8, 9, 10, 11, 12\}\}$}

~

{\footnotesize
$\mathcal{A}_2=\{
\{1, 4\}, \{3, 4\}, \{1, 6\}, \{3, 6\}, \{2, 7\}, \{5, 7\}, \{2, 8\}, \{5, 8\}, \{1, 9\}, \{3, 9\}, \{4, 10\},$ $\{1, 2, 3, 5, 10, 11, 12\},$ $ \{1, 3, 7, 8, 10, 11, 12\},$ $ \{2, 4, 5, 6, 9, 11, 12\},$ $ \{2, 5, 6, 9, 10, 11, 12\},$ $ \{4, 6, 7, 8, 9, 11, 12\},$ $ \{6, 7, 8, 9, 10, 11, 12\}\}$}

~

{\footnotesize
$\mathcal{A}_3=
\{\{1, 2, 6\}, \{3, 4, 6\}, \{1, 5, 6\}, \{3, 6, 7\},$ $ \{4, 5, 8\},$ $ \{2, 6, 8\},$ $ \{5, 7, 8\}, $ $\{1, 2, 9\},$ $ \{3, 4, 9\},$ $ \{1, 5, 9\},$ $ \{3, 7, 9\},$ $ \{2, 8, 9\},$ $ \{2, 4, 10\},$ $ \{4, 6, 10\},$ $ \{2, 7, 10\},$ $ \{5, 7, 10\},$ $ \{4, 9, 10\},$ $ \{1, 2, 3, 4, 5, 7, 11, 12\},$ $ \{1, 2, 3, 4, 7, 8, 11, 12\},$ $ \{1, 2, 3, 5, 8, 10, 11, 12\},$ $ \{1, 3, 4, 5, 10, 11, 12\},$ $ \{1, 3, 4, 7, 8, 10, 11, 12\},$ $ \{1, 3, 6, 8, 9, 10, 11, 12\},$ $ \{1, 4, 6, 7, 8, 9, 11, 12\},$ $ \{1, 6, 7, 8, 9, 10, 11, 12\}, $ $\{2, 3, 5, 6, 9, 10, 11, 12\}, $ $\{2, 4, 5, 6, 7, 9, 11, 12\},$ $ \{3, 5, 6, 8, 9, 10, 11, 12\}\}$}

~

{\footnotesize
$\mathcal{A}_4=$
$\{\{1, 2, 4, 6\}, \{1, 5, 6, 7\}, \{1, 2, 6, 8\}, \{2, 3, 4, 9\}, \{3, 5, 7, 9\}, \{2, 3, 8, 9\},$ $ \{1, 4, 6, 10\},$ $ \{2, 4, 7, 10\},$ $ \{2, 5, 7, 10\},$ $ \{2, 7, 8, 10\},$ $ \{3, 4, 9, 10\},$ $ \{4, 7, 9, 10\},$ $ \{5, 7, 9, 10\},$ $ \{1, 2, 3, 4, 5, 7, 8, 11, 12\},$ $ \{1, 2, 3, 4, 5, 8, 10, 11, 12\},$ $ \{1, 2, 3, 5, 6, 9, 10, 11, 12\}, $ $\{1, 2, 3, 6, 7, 9, 10, 11, 12\},$ $ \{1, 2, 4, 5, 7, 8, 9, 11, 12\},$ $ \{1, 2, 4, 5, 8, 9, 10, 11, 12\}, $ $\{1, 3, 4, 5, 6, 8, 9, 11, 12\},$ $ \{1, 3, 4, 5, 7, 8, 10, 11, 12\},$ $ \{1, 3, 4, 6, 7, 8, 9, 11, 12\},$ $ \{1, 3, 5, 6, 8, 9, 10, 11, 12\},$ $ \{1, 3, 6, 7, 8, 9, 10, 11, 12\},$ $ \{2, 3, 4, 5, 6, 7, 8, 11, 12\},$ $ \{2, 3, 4, 5, 6, 8, 10, 11, 12\},$ $ \{2, 4, 5, 6, 7, 8, 9, 11, 12\},$ $ \{2, 4, 5, 6, 8, 9, 10, 11, 12\},$
$ \{3, 4, 5, 6, 7, 8, 10, 11, 12\}\}$ }

~

{\footnotesize
$\mathcal{A}_5=
\{\{1, 2, 3, 4, 6\}, \{2, 3, 4, 5, 9\}, \{1, 4, 6, 7, 10\}, \{1, 5, 6, 7, 10\}, \{2, 5, 7, 9, 10\},$ $ \{1, 2, 3, 4, 5, 7, 8, 10, 11, 12\},$ $ \{1, 2, 3, 4, 7, 8, 9, 10, 11, 12\},$ $ \{1, 2, 3, 5, 6, 7, 8, 9, 11, 12\},$ $ \{1, 2, 3, 5, 6, 8, 9, 10, 11, 12\}, $ $\{1, 2, 3, 6, 7, 8, 9, 10, 11, 12\},$ $ \{1, 2, 4, 5, 6, 7, 8, 9, 11, 12\}, $ $\{1, 2, 4, 5, 6, 8, 9, 10, 11, 12\}, $ $\{1, 3, 4, 5, 6, 7, 8, 9, 11, 12\},$ $ \{1, 3, 4, 5, 6, 8, 9, 10, 11, 12\},$ $ \{1, 3, 4, 5, 7, 8, 9, 10, 11, 12\},$ $ \{2, 3, 4, 5, 6, 7, 8, 10, 11, 12\},$ $ \{2, 3, 4, 6, 7, 8, 9, 10, 11, 12\},$ $ \{3, 4, 5, 6, 7, 8, 9, 10, 11, 12\}\}$}

~

{\footnotesize
$\mathcal{A}_6=
\{\{2, 3, 4, 5, 6, 7, 8, 9, 10, 11, 12\}, \{1, 3, 4, 5, 6, 7, 8, 9, 10, 11, 12\}, \{1, 2, 4, 5, 6, 7, 8, 9, 10, 11, 12\},$ $ \{1, 2, 3, 5, 6, 7, 8, 9, 10, 11, 12\}, $ $\{1, 2, 3, 4, 6, 7, 8, 9, 10, 11, 12\},$ $ \{1, 2, 3, 4, 5, 7, 8, 9, 10, 11, 12\},$ $ \{1, 2, 3, 4, 5, 6\}\}$}

~

{\footnotesize
$\mathcal{A}_7=\{\{1,2,3,4,5,6,7,8,9,10,11,12\}\}$}

~

Total number of sets: 108

\section{Cross-Sperner families}\label{app:cross_sperner}

A construction for $(n,k) = (7,3)$ with value 6480:

\noindent$\mathcal{F}_1 = \{12,$ $125,$ $126,$ $1256,$ $127,$ $57,$ $157,$ $257,$ $1257,$ $357,$ $457,$ $3457,$ $1267,$ $567,$ $1567,$ $2567,$ $12567,$ $3567,$ $4567,$ $34567\}$

\noindent$\mathcal{F}_2 = \{23,$ $24,$ $234,$ $235,$ $245,$ $2345,$ $236,$ $246,$ $2346,$ $2356,$ $2456,$ $23456,$ $237,$ $247,$ $2347,$ $2367,$ $2467,$ $23467\}$

\noindent$\mathcal{F}_3 = \{13,$ $14,$ $134,$ $135,$ $145,$ $1345,$ $136,$ $146,$ $1346,$ $1356,$ $1456,$ $13456,$ $137,$ $147,$ $1347,$ $1367,$ $1467,$ $13467\}$

\hrulefill

A construction for $(n,k) = (8,3)$ with value 51840:

\noindent$\mathcal{F}_1 = \{25,$ $125,$ $235,$ $1235,$ $245,$ $256,$ $1256,$ $2356,$ $12356,$ $2456,$ $257,$ $1257,$ $2357,$ $12357,$ $2457,$ $2567,$ $12567,$ $23567,$ $123567,$ $24567,$ $48,$ $248,$ $258,$ $458,$ $2458,$ $468,$ $2468,$ $2568,$ $4568,$ $24568,$ $478,$ $2478,$ $2578,$ $4578,$ $24578,$ $4678,$ $24678,$ $25678,$ $45678,$ $245678\}$

\noindent$\mathcal{F}_2 = \{14,$ $124,$ $34,$ $134,$ $234,$ $1234,$ $145,$ $345,$ $1345,$ $146,$ $1246,$ $346,$ $1346,$ $2346,$ $12346,$ $1456,$ $3456,$ $13456,$ $147,$ $1247,$ $347,$ $1347,$ $2347,$ $12347,$ $1457,$ $3457,$ $13457,$ $1467,$ $12467,$ $3467,$ $13467,$ $23467,$ $123467,$ $14567,$ $34567,$ $134567\}$

\noindent$\mathcal{F}_3 = \{18,$ $128,$ $38,$ $138,$ $238,$ $1238,$ $158,$ $358,$ $1358,$ $168,$ $1268,$ $368,$ $1368,$ $2368,$ $12368,$ $1568,$ $3568,$ $13568,$ $178,$ $1278,$ $378,$ $1378,$ $2378,$ $12378,$ $1578,$ $3578,$ $13578,$ $1678,$ $12678,$ $3678,$ $13678,$ $23678,$ $123678,$ $15678,$ $35678,$ $135678\}$

\hrulefill

A construction for $(n,k) = (6,4)$ with value 1764:

\noindent$\mathcal{F}_1 = \{345,$ $46,$ $146,$ $346,$ $456,$ $3456\}$

\noindent$\mathcal{F}_2 = \{13,$ $123,$ $134,$ $135,$ $136,$ $156,$ $1356\}$

\noindent$\mathcal{F}_3 = \{24,$ $124,$ $234,$ $125,$ $145,$ $245,$ $1245\}$

\noindent$\mathcal{F}_4 = \{235,$ $26,$ $126,$ $236,$ $256,$ $2356\}$

\hrulefill

A construction for $(n,k) = (7,4)$ with value 28350:

\noindent$\mathcal{F}_1 = \{13,$ $123,$ $14,$ $124,$ $134,$ $1234,$ $135,$ $145,$ $1345,$ $137,$ $147,$ $1347,$ $1357,$ $1457,$ $13457\}$

\noindent$\mathcal{F}_2 = \{125,$ $16,$ $126,$ $156,$ $256,$ $1256,$ $127,$ $1257,$ $167,$ $267,$ $1267,$ $1567,$ $2567,$ $12567\}$

\noindent$\mathcal{F}_3 = \{235,$ $245,$ $2345,$ $237,$ $247,$ $2347,$ $2357,$ $2457,$ $23457\}$

\noindent$\mathcal{F}_4 = \{36,$ $236,$ $46,$ $246,$ $346,$ $2346,$ $356,$ $456,$ $3456,$ $367,$ $467,$ $3467,$ $3567,$ $4567,$ $34567\}$

\section{A double cover of $\{0,1,2\}^5$ with 41 boxes}\label{app:boxes}

\begin{multicols}{2}

\noindent$ B_{1} = \{0\}\times \{1\}\times \{0,1\}\times \{0,1\}\times \{2\}$\\
$ B_{2} = \{0\}\times \{1\}\times \{0,1\}\times \{0,2\}\times \{2\}$\\
$ B_{3} = \{0\}\times \{1\}\times \{0,2\}\times \{1,2\}\times \{2\}$\\
$ B_{4} = \{0\}\times \{1\}\times \{1,2\}\times \{1,2\}\times \{2\}$\\
$ B_{5} = \{1\}\times \{0\}\times \{2\}\times \{0\}\times \{0,1\}$\\
$ B_{6} = \{1\}\times \{0\}\times \{2\}\times \{0\}\times \{0,2\}$\\
$ B_{7} = \{1\}\times \{0\}\times \{2\}\times \{0\}\times \{1,2\}$\\
$ B_{8} = \{1\}\times \{0\}\times \{0,1\}\times \{0,1\}\times \{0\}$\\
$ B_{9} = \{1\}\times \{0\}\times \{0,1\}\times \{0,2\}\times \{0\}$\\
$ B_{10} = \{1\}\times \{0\}\times \{0,2\}\times \{1,2\}\times \{0\}$\\
$ B_{11} = \{1\}\times \{0\}\times \{1,2\}\times \{1,2\}\times \{0\}$\\
$ B_{12} = \{2\}\times \{2\}\times \{0,1\}\times \{0,1\}\times \{1\}$\\
$ B_{13} = \{2\}\times \{2\}\times \{0,1\}\times \{0,2\}\times \{1\}$\\
$ B_{14} = \{2\}\times \{2\}\times \{0,2\}\times \{1,2\}\times \{1\}$\\
$ B_{15} = \{2\}\times \{2\}\times \{1,2\}\times \{1,2\}\times \{1\}$\\
$ B_{16} = \{0,1\}\times \{0,2\}\times \{1\}\times \{1,2\}\times \{1,2\}$\\
$ B_{17} = \{0,1\}\times \{0,2\}\times \{0,1\}\times \{0\}\times \{1,2\}$\\
$ B_{18} = \{0,1\}\times \{0,2\}\times \{0,2\}\times \{1,2\}\times \{1,2\}$\\
$ B_{19} = \{0,1\}\times \{1,2\}\times \{2\}\times \{0\}\times \{2\}$\\
$ B_{20} = \{0,1\}\times \{1,2\}\times \{2\}\times \{2\}\times \{0,1\}$\\
$ B_{21} = \{0,1\}\times \{1,2\}\times \{2\}\times \{0,1\}\times \{0,1\}$\\
\noindent$ B_{22} = \{0,1\}\times \{1,2\}\times \{0,1\}\times \{2\}\times \{0,1\}$\\
$ B_{23} = \{0,1\}\times \{1,2\}\times \{0,1\}\times \{0,1\}\times \{0,1\}$\\
$ B_{24} = \{0,2\}\times \{0,1\}\times \{0\}\times \{1\}\times \{0,1\}$\\
$ B_{25} = \{0,2\}\times \{0,1\}\times \{0\}\times \{0,2\}\times \{0,1\}$\\
$ B_{26} = \{0,2\}\times \{0,1\}\times \{2\}\times \{0\}\times \{2\}$\\
$ B_{27} = \{0,2\}\times \{0,1\}\times \{1,2\}\times \{0\}\times \{0,1\}$\\
$ B_{28} = \{0,2\}\times \{0,1\}\times \{1,2\}\times \{1,2\}\times \{0,1\}$\\
$ B_{29} = \{0,2\}\times \{0,2\}\times \{0\}\times \{1\}\times \{0,2\}$\\
$ B_{30} = \{0,2\}\times \{0,2\}\times \{0\}\times \{0,2\}\times \{0,2\}$\\
$ B_{31} = \{0,2\}\times \{0,2\}\times \{2\}\times \{0\}\times \{1\}$\\
$ B_{32} = \{0,2\}\times \{0,2\}\times \{1,2\}\times \{1\}\times \{0,2\}$\\
$ B_{33} = \{0,2\}\times \{0,2\}\times \{1,2\}\times \{0,2\}\times \{0,2\}$\\
$ B_{34} = \{1,2\}\times \{0,1\}\times \{1\}\times \{1,2\}\times \{1,2\}$\\
$ B_{35} = \{1,2\}\times \{0,1\}\times \{0,1\}\times \{0\}\times \{1,2\}$\\
$ B_{36} = \{1,2\}\times \{0,1\}\times \{0,2\}\times \{1,2\}\times \{1,2\}$\\
$ B_{37} = \{1,2\}\times \{1,2\}\times \{0\}\times \{0,2\}\times \{0,2\}$\\
$ B_{38} = \{1,2\}\times \{1,2\}\times \{1\}\times \{1\}\times \{0,2\}$\\
$ B_{39} = \{1,2\}\times \{1,2\}\times \{2\}\times \{0\}\times \{1\}$\\
$ B_{40} = \{1,2\}\times \{1,2\}\times \{0,2\}\times \{1\}\times \{0,2\}$\\
$ B_{41} = \{1,2\}\times \{1,2\}\times \{1,2\}\times \{0,2\}\times \{0,2\}$\\
\end{multicols}

\section{A spanning subgraph of the 6-cube with 81 edges and diameter 6}\label{app:erdos_hypercube}

The edges of the subgraph are as follows:

\begin{multicols}{3}
\noindent(000110, 000100) \\
(000110, 001110) \\
(000110, 010110) \\
(000111, 000011) \\
(000111, 100111) \\
(000100, 000101) \\
(000100, 000000) \\
(000100, 001100) \\
(000100, 010100) \\
(000100, 100100) \\
(000101, 000001) \\
(000101, 010101) \\
(000101, 100101) \\
(000010, 000000) \\
(000010, 100010) \\
(000011, 000001) \\
(000011, 001011) \\
(000000, 000001) \\
(000000, 001000) \\
(000000, 010000) \\
(000000, 100000) \\
(000001, 001001) \\
(000001, 010001) \\
(001110, 011110) \\
(001111, 001101) \\
(001111, 101111) \\
(001100, 011100) \\
(001100, 101100) \\
(001101, 001001) \\
(001010, 001000) \\
(001010, 011010) \\
(001011, 011011) \\
(001000, 011000) \\
(010110, 110110) \\
(010111, 010101) \\
(010111, 010011) \\
(010100, 110100) \\
(010010, 010000) \\
(010010, 110010) \\
(010011, 110011) \\
(010000, 110000) \\
(010001, 011001) \\
(011110, 111110) \\
(011111, 011101) \\
(011111, 011011) \\
(011111, 111111) \\
(011100, 111100) \\
(011101, 011001) \\
(011010, 111010) \\
(011000, 111000) \\
(100110, 100100) \\
(100110, 101110) \\
(100111, 110111) \\
(100101, 100001) \\
(100010, 101010) \\
(100011, 100001) \\
(100011, 101011) \\
(100000, 101000) \\
(101110, 111110) \\
(101111, 111111) \\
(101100, 101101) \\
(101101, 111101) \\
(101010, 101011) \\
(101011, 111011) \\
(101000, 101001) \\
(101001, 111001) \\
(110110, 110111) \\
(110111, 111111) \\
(110100, 110101) \\
(110101, 111101) \\
(110010, 110011) \\
(110011, 111011) \\
(110000, 110001) \\
(110001, 111001) \\
(111110, 111111) \\
(111110, 111100) \\
(111111, 111101) \\
(111111, 111011) \\
(111010, 111011) \\
(111010, 111000) \\
(111011, 111001) \\
\end{multicols}

\end{document}